\documentclass[a4paper,dvips,11pt]{article}
\usepackage[latin1]{inputenc}
\usepackage{amsmath,amsthm,amsfonts,amssymb}
\usepackage{enumerate}
\usepackage{graphicx,psfrag}
\usepackage{pst-node,pstricks-add,pst-text,pst-3d,pst-tree}
\usepackage[ps2pdf,colorlinks]{hyperref}

\newlength{\hdelta}
\newlength{\vdelta}
\setlength{\hdelta}{0mm}
\setlength{\vdelta}{5mm}
\addtolength{\oddsidemargin}{-.5\hdelta}
\addtolength{\evensidemargin}{-.5\hdelta}
\addtolength{\textwidth}{\hdelta}
\addtolength{\topmargin}{-.5\vdelta}
\addtolength{\textheight}{\vdelta}

\theoremstyle{plain}
\newtheorem{theorem}{Theorem}[section]
\newtheorem{lemma}[theorem]{Lemma}

\newtheorem{proposition}[theorem]{Proposition}

\theoremstyle{definition}

\newtheorem{example}[theorem]{Example}

\numberwithin{equation}{section}

\newcommand{\mailto}[1]{\href{mailto:#1}{\nolinkurl{#1}}}

\newcommand{\rc}{\overline{G}}

\newcommand{\E}{\operatorname{E}}
\newcommand{\pr}{\operatorname{P}}
\newcommand{\eqst}{=_{\rm{st}}}

\newcommand{\Z}{\mathbb{Z}}

\newcommand{\R}{\mathbb{R}}

\begin{document}

\title{Juggler's exclusion process}

\author{
 Lasse Leskelä\thanks{
 Postal address:
 Department of Mathematics and Statistics,
 PO Box 35,
 40014 University of Jyväskylä,
 Finland.
 Tel: +358 14 260 2728.
 URL: \url{http://www.iki.fi/lsl/} \quad
 Email: \protect\mailto{lasse.leskela@iki.fi}}
 \and
 Harri Varpanen\thanks{
 Postal address:
 Department of Mathematics and Systems Analysis,
 Aalto University,
 PO Box 11100, 00076 Aalto, Finland.
 Tel: +358 9 4702 3069.
 URL: \url{http://math.tkk.fi/en/people/harri.varpanen} \quad
 Email: \protect\mailto{harri.varpanen@tkk.fi}}
}
\date{\today}

\maketitle

\begin{abstract}
Juggler's exclusion process describes a system of particles on the positive integers where
particles drift down to zero at unit speed. After a particle hits zero, it jumps into a randomly
chosen unoccupied site. We model the system as a set-valued Markov process and show that the
process is ergodic if the family of jump height distributions is uniformly integrable. In a
special case where the particles jump according to a set-avoiding memoryless distribution, the
process reaches its equilibrium in finite nonrandom time, and the equilibrium distribution can be
represented as a Gibbs measure conforming to a linear gravitational potential.

Keywords: exclusion process; juggling pattern; set-valued Markov process; ergodicity; positive recurrence; set-avoiding memoryless distribution; noncolliding random walk; Gibbs measure; maximum entropy
\end{abstract}


\section{Introduction}

Juggler's exclusion process (JEP) describes a system of $n$ indistinguishable particles on the
positive integers $\Z_+ = \{0,1,2,\dots\}$, where particles drift down to zero at unit speed, and
after a particle hits zero, it jumps into a randomly chosen unoccupied site. Denote by $S_n$ the
countable space of $n$-element subsets of $\Z_+$. A JEP is defined as a discrete-time Markov
process in $S_n$ with transition probability matrix
\begin{equation}
 \label{eq:TransitionMatrix}
 P(A,B) = \left\{
 \begin{aligned}
   1,              & \quad \text{if} \ 0 \notin A, \ B = A-1, \\
   \nu_{A^*-1}(y), & \quad \text{if} \ 0 \in A, \ B = (A^*-1) \cup \{y\}, \\
   0,              & \quad \text{else},
 \end{aligned}
 \right.
\end{equation}
where $A-1$ is the set obtained by shifting the elements of $A$ down by one position, $A^* = A
\setminus \{\min A\}$ is the set obtained by deleting the smallest element of $A$, and $\nu =
(\nu_B)$ is a family of probability measures on $\Z_+$ indexed by $B \in S_{n-1}$, satisfying
\begin{equation}
 \label{eq:Noncolliding}
 \sum_{y \in B} \nu_B(y) = 0 \quad \text{for all $B \in S_{n-1}$}.
\end{equation}
In general, a set $A \in S_n$ describes the heights of the particles in the system. The
probability measure $\nu_B$ describes the height of a particle after a jump which occurred while
the other $n-1$ particles drifted down from configuration $B+1$ into configuration $B$.
Assumption~\eqref{eq:Noncolliding} guarantees that all particles jump into unoccupied sites, and
therefore the number of particles in the system remains fixed at each transition.

The name JEP is inspired by viewing the particles as balls that a juggler throws into a discrete
set of heights according to a random pattern. In this context condition~\eqref{eq:Noncolliding}
amounts to a so-called siteswap juggling pattern where only one ball is allowed to be at hand
(zero height) at any time instant. Such patterns have been studied mostly in the deterministic
periodic setting, see e.g. Knutson, Lam and Speyer \cite{Knutson_Lam_Speyer} and references
therein. In addition, Warrington \cite{Warrington_2005} has computed the equilibrium distribution
of a random siteswap pattern where throw heights are bounded and uniformly distributed.

JEPs may be naturally encountered in various application areas such as quantum physics and
engineering. For example, if particle heights are viewed as energy levels, a JEP can be viewed as
a system of $n$ randomly excited particles subject to the condition that no two particles may
share the same energy level. Alternatively, a JEP may represent the residual job completion times
in a manufacturing system of $n$ machines running in parallel, which is operated under the
condition that only one job may complete at any time instant. More generally, the particle heights
in a JEP may be viewed as residual lifetimes in a discrete-time point process which may be
regarded as a superposition of $n$ coupled renewal processes.

A probability measure $\pi$ on $S_n$ is an equilibrium distribution of a JEP if and only if it
satisfies the balance equation $\pi P = \pi$, or equivalently,
\begin{equation}
 \label{eq:Equilibrium}
 \pi(B)
 \ = \ \pi(B+1) \ + \sum_{\substack{A \in S_{n-1} \\ A \subset B}} \pi( (A+1) \cup \{0\} ) \, \nu_{A}(B \setminus A)
\end{equation}
for all $B \in S_n$. The following example illustrates the solution of~\eqref{eq:Equilibrium} for
one-particle systems.

\begin{example}[One-particle JEP]
\label{exa:JEP1}
For $n=1$ the family $(\nu_B)$ reduces to a single probability measure $\nu_\emptyset$. In this
case the height of the unique particle in the system's configuration is a discrete-time Markov
process on $\Z_+$ with transition probabilities
\[
 P(x,y) = \left\{
 \begin{aligned}
   1,                & \quad \text{if} \ x > 0, \ y = x - 1, \\
   \nu_\emptyset(y), & \quad \text{if} \ x = 0, \ y \ge 0, \\
   0,                & \quad \text{else}.
 \end{aligned}
 \right.
\]
This process may be identified as the residual lifetime of a discrete-time renewal process with
interevent distribution $\nu_\emptyset$ (e.g.\ Asmussen~\cite[Section I.2]{Asmussen_2003}). A
direct computation shows that this JEP has an equilibrium distribution if and only if $m = \sum_{x
\ge 0} x \nu_\emptyset(x) < \infty$, in which case the equilibrium is unique and given by $\pi(x)
= m^{-1} \nu_\emptyset([x,\infty))$. We note that $\pi = \nu_\emptyset$ in the special case where
$\nu_\emptyset$ is geometric, a well-known fact in renewal theory.
\end{example}

In this paper we will show that a wide class of JEPs have a unique equilibrium distribution
(Theorem~\ref{the:Stability}). However, finding an analytical formula for the equilibrium by
solving~\eqref{eq:Equilibrium} for a general JEP with $n \ge 2$ particles appears difficult if not
impossible. We saw in Example~\ref{exa:JEP1} that for a geometric jump height distribution the
equilibrium is also geometric. This suggests that a closed-form analytical formula for
many-particle systems might be available for some special jump height distributions. The problem
is that condition~\eqref{eq:Noncolliding} rules out geometric height distributions in the
many-particle case. To get around this problem, we introduce the notion of a memoryless
distribution on a subset of integers, and show that when particles jump according to such
distribution, the equilibrium can be expressed as a Gibbs measure conforming to a linear
gravitational potential (Theorems~\ref{the:GeometricEquilirium} and~\ref{the:GibbsRealization}).
Such JEPs are shown to converge to equilibrium ultrafast: in a finite nonrandom time
(Theorem~\ref{the:Convergence}).

The model studied in this paper may be seen as a special instance of an exclusion process,
although most literature on exclusion processes is focused on continuous-time models with
infinitely many particles, especially the asymmetric simple exclusion process (ASEP) on the doubly
infinite integer lattice (see Grimmett~\cite[Sec.~10.4]{Grimmett_2010} or
Liggett~\cite[Sec.~VIII]{Liggett_1985} for a general overview, and Martin and
Schmidt~\cite{Martin_Schmidt} for discrete-time ASEP models). To avoid confusion, we emphasize
that as an exclusion process, a JEP is very special because the movement of particles is
deterministic, and the only source of randomness are the jumps from the boundary.

The rest of this paper is organized as follows. Section~\ref{sec:Stability} presents a general
theorem for the existence of a unique equilibrium under natural integrability conditions on the
jump height distributions. Section~\ref{sec:MemorylessJEP} provides a detailed analysis of JEPs
generated by memoryless jump height distributions. Section~\ref{sec:RelatedModels} briefly
discusses two models closely related to memoryless JEPs, and Section~\ref{sec:Conclusions}
concludes the paper.

\section{Existence of an equilibrium}
\label{sec:Stability}

The goal of this section is to study when the probability distribution of a JEP converges to a
unique equilibrium distribution, regardless of its initial state. To rule out Markov processes
with multiple recurrent classes and periodic dynamics, the following result gives a simple
condition.

\begin{lemma}
\label{the:aperiodic}
Any JEP generated by a family $(\nu_B)$ such that
\begin{equation}
 \label{eq:aperiodic}
 \nu_B( \min(B^c) )> 0 \quad \text{for all $B \in S_n$}
\end{equation}
is aperiodic and has a unique recurrent class.
\end{lemma}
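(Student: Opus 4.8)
The statement has two parts: aperiodicity, and uniqueness of the recurrent class. I would establish both by exhibiting a single distinguished state that the chain reaches from everywhere and returns to at consecutive time steps.

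First, aperiodicity. Fix the ``ground state'' $G = \{0,1,\dots,n-1\} \in S_n$, the lowest possible configuration. I claim $P(G,G) > 0$. Indeed $0 \in G$, so the chain deletes $\min G = 0$, shifts the rest down to get $G^* - 1 = \{0,1,\dots,n-2\} \in S_{n-1}$, and then jumps the hand particle to a fresh site $y$ with probability $\nu_{G^*-1}(y)$. Here $B = G^*-1 = \{0,1,\dots,n-2\}$, so $\min(B^c) = n-1$, and by hypothesis \eqref{eq:aperiodic} we have $\nu_B(n-1) > 0$; the resulting configuration is $\{0,1,\dots,n-2\}\cup\{n-1\} = G$. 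Hence $P(G,G)>0$, so the state $G$ has period $1$. Since aperiodicity is a class property, every state communicating with $G$ is aperiodic; and the second part of the argument will show every state communicates with $G$.

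Second, the unique recurrent class. It suffices to show that from every state $A \in S_n$ the ground state $G$ is reachable in finitely many steps, i.e.\ $P^k(A,G) > 0$ for some $k$. From this, $G$ lies in every recurrent class (a recurrent class, once entered, is never left, but $G$ is reachable from any of its states, forcing $G$ itself to be in the class), so there can be only one recurrent class, and the ``aperiodic'' conclusion above applies to it. To reach $G$: if $0 \notin A$, apply the deterministic drift $A \mapsto A-1$ repeatedly until the smallest element hits $0$; this changes nothing essential, so assume $0 \in A$. Now repeatedly apply the move that jumps the hand particle to the lowest available site. Concretely, from a state $A$ with $0\in A$, set $B = A^*-1$ and use the transition to $(A^*-1)\cup\{\min(B^c)\}$, which has positive probability by \eqref{eq:aperiodic}. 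One checks that each such ``greedy'' step, possibly interleaved with drift steps to bring a particle back to $0$, strictly decreases a suitable potential on $S_n$ (for instance $\sum_{x\in A} x$, or the lexicographically ordered sorted tuple) until the configuration is exactly $\{0,1,\dots,n-1\}=G$; since the potential is nonnegative-integer-valued, $G$ is reached in finitely many steps.

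**Main obstacle.** The only nontrivial point is verifying that the greedy/drift procedure actually terminates at $G$ and that it stays within the support of the transition matrix at every step — in particular, that filling the lowest hole $\min(B^c)$ never ``overshoots'' and that the chosen potential is genuinely monotone along the constructed path (a pure drift step leaves $\sum_{x\in A}x$ unchanged only when some element is $0$, and the jump step can temporarily raise it, so one must phrase the monotone quantity carefully, e.g.\ comparing sorted configurations in the coordinatewise or lexicographic order, or counting the steps needed explicitly). This is a finite, elementary bookkeeping argument rather than a conceptual difficulty, so I expect the write-up to be short.
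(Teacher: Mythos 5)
Your proposal is correct and follows essentially the same route as the paper: fix the ground state $G=\{0,\dots,n-1\}$, get aperiodicity from $P(G,G)>0$ via condition~\eqref{eq:aperiodic}, and get uniqueness of the recurrent class by showing $A\to G$ for every $A$ (a step the paper simply declares ``not hard to verify''). The bookkeeping you flag as the main obstacle is in fact clean with the potential $\sum_{x\in A}x$: a drift step decreases it by $n$, and a greedy jump replaces the deleted $n-1$ units by $\min(B^c)\le n-1$, so it never increases and is strictly decreasing off $G$, forcing termination at $G$.
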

\begin{proof}
Denote by $G=[0,n-1]$ the ground state of $S_n$ where the particles are located as low as
possible. Let us write $A \to B$ if $P^t(A,B) > 0$ for some $t \ge 1$, and define $\rc = \{A \in
S_n: G \to A\}$ as the set of configurations accessible from $G$. Using~\eqref{eq:aperiodic} it is
not hard to verify that $A \to G$ for any $A \in S_n$, which implies that $\rc$ is the unique
recurrent class of the process. Assumption~\eqref{eq:aperiodic} further yields $P(G,G) > 0$, which
implies that the process is aperiodic.
\end{proof}

Not all aperiodic JEPs converge to an equilibrium. For example, if each particle jumps to a height
twice the maximum height of the other particles, then such a process evidently drifts to infinity.
To rule out such nonphysical examples, we need to impose some conditions on the jump height
distributions. A family of probability measures $(\nu_B)$ on $\Z_+$ is called \emph{uniformly
integrable} if
\begin{equation}
 \label{eq:UI}
 \sup_{B} \sum_{x > K} x \nu_B(x) \to 0 \quad \text{as} \ K \to \infty.
\end{equation}

The following result shows that a wide class of juggler's exclusion processes is stochastically
stable in the sense that as time tends to infinity, the probability distribution of the particle
configuration converges to a unique equilibrium.

\begin{theorem}
\label{the:Stability}
Any JEP generated by a uniformly integrable family of distributions $(\nu_B)$
satisfying~\eqref{eq:aperiodic} has a unique equilibrium distribution $\pi$, and the process
started from any initial configuration converges in total variation to $\pi$ as time tends to
infinity.
\end{theorem}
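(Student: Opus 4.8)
The plan is to establish positive recurrence of the unique recurrent class $\rc$ (identified in Lemma~\ref{the:aperiodic}) via a Foster--Lyapunov drift argument, and then to invoke the standard convergence theorem for aperiodic positive recurrent countable Markov chains to obtain existence, uniqueness, and total-variation convergence to $\pi$. Since the movement of particles is deterministic except at jumps, the natural Lyapunov function is the total height $V(A) = \sum_{x \in A} x$, or a closely related monotone functional of the configuration. Away from the boundary ($0 \notin A$) we have $V(A-1) = V(A) - n$, so the chain decreases deterministically by $n$ per step; the only possible increase happens at a boundary transition, where one particle resets its height according to some $\nu_B$.

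First I would compute the one-step drift $\E[V(X_{t+1}) \mid X_t = A]$. For $0 \notin A$ this equals $V(A) - n$. For $0 \in A$, writing $B = A^* - 1$ so that $V(A) = V(B) + 1$ (the element $0$ contributes nothing and the remaining $n-1$ elements drop by one each, but $A^*$ has $n-1$ elements so $V(A^*-1) = V(A) - (n-1)$, and $V(A) = V(A^*) $ since the deleted element is $0$), the new configuration is $(A^*-1) \cup \{y\}$ with $y \sim \nu_B$, giving expected new height $V(A^*-1) + \sum_y y\,\nu_B(y)$. Thus the drift is governed by the conditional mean jump height $m_B := \sum_y y\,\nu_B(y)$. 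Uniform integrability~\eqref{eq:UI} implies $\sup_B m_B =: M < \infty$. So at boundary states the total height can increase by at most roughly $M$, while at every interior step it decreases by exactly $n$. The key observation is that from any state $A$ with $0 \notin A$, the chain is forced to take $\min A$ deterministic downward steps before the next boundary visit, during which $V$ drops by $n \cdot \min A$; hence for $\min A$ large enough the expected value of $V$ after those steps plus the subsequent bounded boundary increase is still strictly less than $V(A)$.

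To package this cleanly I would work with a slightly coarsened Lyapunov function or else verify the drift condition directly on the embedded picture: let $V(A) = \max A$ (the top height), which for $0 \notin A$ satisfies $V(A-1) = V(A)-1$, and for $0 \in A$ satisfies $V((A^*-1)\cup\{y\}) \le \max(\max A - 1, \,y)$. Taking expectations and using Markov's inequality with~\eqref{eq:UI}, there is a finite $K$ with $\sup_B \pr(Y_B \ge K) \le 1/2$ (say), where $Y_B \sim \nu_B$; combining, one obtains that for $\max A$ sufficiently large relative to $K$, $\E[V(X_{t+1}) \mid X_t = A] \le V(A) - \epsilon$ for some $\epsilon > 0$, and for the finitely many states with $\max A$ small, $\E[V(X_{t+1}) \mid X_t = A] < \infty$. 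This is precisely the Foster--Lyapunov criterion for positive recurrence of the irreducible chain on $\rc$. By Lemma~\ref{the:aperiodic} the chain restricted to $\rc$ is irreducible and aperiodic, so by the convergence theorem for positive recurrent aperiodic Markov chains on a countable state space it has a unique stationary distribution $\pi$ (supported on $\rc$) and $\|P^t(A, \cdot) - \pi\|_{\rm TV} \to 0$ for every $A \in \rc$; since every $A \in S_n$ satisfies $A \to G \in \rc$ and is transient otherwise, the same convergence holds from every initial configuration, and $\pi$ is the unique equilibrium on all of $S_n$.

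The main obstacle I anticipate is bookkeeping rather than conceptual: making the drift estimate uniform over the uncountable index set $\{B\}$ requires that the bound coming from~\eqref{eq:UI} not depend on $B$, which is exactly what uniform integrability provides, but one must be careful that the "excursion to the boundary" has length exactly $\min A$ and that during a potentially long excursion no unbounded accumulation occurs — this is automatic since only one jump happens per excursion. A secondary technical point is handling states where $\max A$ is moderate but $\min A$ is small (so the next boundary hit is imminent); here one either enlarges the exceptional compact set in the Foster criterion, or uses the total-height function $V(A) = \sum_{x\in A} x$ together with the observation $\min A \le V(A)/n$ to convert a bound on $V$ into a bound on the excursion length. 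Either route works; I would choose whichever keeps the inequalities shortest.
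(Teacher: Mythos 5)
Your overall strategy coincides with the paper's: the Lyapunov function $V(A)=\max A$, the deterministic drift $-1$ off the boundary, the bound $V((A^*-1)\cup\{y\})=\max(V(A)-1,y)$ at the boundary, and then Foster--Lyapunov plus aperiodicity and the unique recurrent class from Lemma~\ref{the:aperiodic}. (The paper routes the final step through the $\psi$-irreducibility framework of Meyn and Tweedie rather than the classical countable-chain convergence theorem, but on the countable space $S_n$ both are legitimate.)

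There is one step in your write-up that does not work as stated. At a boundary state you reduce the drift estimate to the tail-probability bound $\sup_B \pr(Y_B\ge K)\le 1/2$ obtained ``by Markov's inequality.'' A tail-probability bound is not enough: since
\[
 \E\bigl[\max(V(A)-1,\,Y_B)\bigr] \;=\; V(A)-1 \;+\; \E\bigl[(Y_B-V(A)+1)^+\bigr],
\]
what must be small is the truncated first moment $\sum_{y\ge V(A)} y\,\nu_B(y)$, uniformly in $B$; a distribution can have $\pr(Y_B\ge V(A))$ tiny while $\E[Y_B\,;\,Y_B\ge V(A)]$ is huge, which would destroy the negative drift. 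The repair is immediate and is exactly what the paper does: condition~\eqref{eq:UI} is, verbatim, the statement that $\sup_B\sum_{y>K} y\,\nu_B(y)\to 0$, so you should invoke uniform integrability directly rather than pass through Markov's inequality. With that substitution your argument is the paper's argument; you should also record, as the paper does, that~\eqref{eq:UI} gives $PV(A)<\infty$ for every $A$, so that the finitely many states with $V(A)\le K$ cause no trouble in the Foster criterion.
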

\begin{proof}
Define the function $V: S_n \to \R_+$ by $V(A) = \max A$, and denote
\[
 PV(A) = \sum_{B \in S_n} V(A) P(A,B),
\]
where $P(A,B)$ are the transition probabilities in~\eqref{eq:TransitionMatrix}. If $0 \notin A$,
then
\begin{equation}
 \label{eq:DriftPositive}
 PV(A) - V(A) = V(A-1) - V(A) = - 1.
\end{equation}
Assume next that $0 \in A$. Then
\[
 PV(A) = \sum_{y \in \Z_+} V((A^*-1) \cup \{y\}) \nu_{A^*-1}(y),
\]
and because $V((A^*-1) \cup \{y\}) = \max(V(A)-1,y)$, it follows that
\begin{align*}
 PV(A)
 & = \sum_{y < V(A)} (V(A)-1) \nu_{A^*-1}(y) + \sum_{y \ge V(A)} y \nu_{A^*-1}(y) \\
 & \le V(A)-1 + \sup_{B \in S_{n-1}} \sum_{y \ge V(A)} y \nu_B(y).
\end{align*}
The uniform integrability of $(\nu_B)$ implies that the last term on the right side above is less
than $\frac12$ whenever $V(A)$ is large enough. After combining this observation
with~\eqref{eq:DriftPositive}, we conclude that there exists a number $K>0$ such that
\begin{equation}
 \label{eq:DriftBig}
 PV(A) - V(A) \le -\frac12 \quad \text{for all $A \in S_n$ such that $V(A) > K$}.
\end{equation}
The uniform integrability of $(\nu_B)$ also implies that $PV(A) < \infty$ for all $A \in S_n$.
Because the set $\{A \in S_n: V(A) \le K\}$ is finite, we conclude that
\begin{equation}
 \label{eq:DriftSmall}
 \sup_{A \in S_n: V(A) \le K} PV(A) < \infty.
\end{equation}

Further, Lemma~\ref{the:aperiodic} implies that the JEP is aperiodic and has a unique recurrent
class $\rc$. As a consequence, the process is $\psi$-recurrent (Meyn and
Tweedie~\cite[Sec.~4.2]{Meyn_Tweedie_1993}), where the measure $\psi$ on the power set of $S_n$ is
defined by $\psi(\overline{A}) = | \rc \cap \overline{A} |$. In light of the Foster--Lyapunov
drift bounds~\eqref{eq:DriftBig} and~\eqref{eq:DriftSmall}, the claim now follows by applying
\cite[Theorem 14.0.1]{Meyn_Tweedie_1993} (with $f$ identically one).
\end{proof}

\section{JEP with memoryless height distributions}
\label{sec:MemorylessJEP}

\subsection{Memoryless distribution on a subset of positive integers}
\label{sec:MemorylessDistribution}

This section is devoted to analyzing JEPs generated by height distributions
\begin{equation}
 \label{eq:MemorylessDistribution}
 \nu_B(y) = \left\{
 \begin{aligned}
   (1-\alpha) \alpha^{h_B(y)}, &\quad y \in B^c, \\
   0,                          &\quad y \in B,
 \end{aligned}
 \right.
\end{equation}
where $h_B(y) = | [0,y-1] \setminus B |$ is the number of points in $B^c$ strictly less than $y$,
and $\alpha \in (0,1)$. The probability measure defined by~\eqref{eq:MemorylessDistribution} shall
be called the \emph{memoryless distribution} on $B^c$ with parameter $\alpha$. This nomenclature
is motivated by the following property, which states that if $\xi_B$ denotes a random variable
with distribution $\nu_B$, then the conditional distribution of $\xi_B$ given that $\xi_B \ge s$
is the same as the distribution of the random variable $s + \xi_{B-s}$.
\begin{proposition}
\label{the:Memoryless}
For all finite $B \subset \Z_+$, the probability measure $\nu_B$ defined
by~\eqref{eq:MemorylessDistribution} satisfies
\begin{equation}
 \label{eq:Memoryless}
 \nu_B[t,\infty) = \nu_B[s,\infty) \, \nu_{B-s}[t-s,\infty) \quad \text{for all $s \le t$}.
\end{equation}
\end{proposition}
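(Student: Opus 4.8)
The plan is to reduce the multiplicative identity~\eqref{eq:Memoryless} to a simple additive identity for the counting function $h_B$. First I would obtain a closed form for the tails of $\nu_B$. Writing $B^c = \{b_0 < b_1 < b_2 < \cdots\}$, the definition of $h_B$ gives $h_B(b_k) = k$, so $\nu_B(b_k) = (1-\alpha)\alpha^k$; in other words $\nu_B$ is the geometric law with parameter $\alpha$ transported onto $B^c$. Since $h_B(s)$ counts exactly the elements of $B^c$ that lie strictly below $s$, the points of $B^c$ contained in $[s,\infty)$ are precisely $b_{h_B(s)}, b_{h_B(s)+1}, \dots$ (this holds whether or not $s \in B$), and summing a geometric series yields
\[
 \nu_B[s,\infty) \;=\; \sum_{k \ge h_B(s)} (1-\alpha)\alpha^k \;=\; \alpha^{h_B(s)}.
\]
In particular $\nu_B[0,\infty) = \alpha^{h_B(0)} = 1$, which also re-verifies that $\nu_B$ is a probability measure.

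Granting this formula, identity~\eqref{eq:Memoryless} becomes equivalent to the claim that $h_B(t) = h_B(s) + h_{B-s}(t-s)$ whenever $s \le t$. I would prove this by splitting an interval: $[0,t-1]\setminus B$ is the disjoint union of $[0,s-1]\setminus B$ and $[s,t-1]\setminus B$, whence $h_B(t) = h_B(s) + |[s,t-1]\setminus B|$. The translation $i \mapsto i-s$ maps $[s,t-1]\setminus B$ bijectively onto $[0,t-s-1]\setminus(B-s)$, so $|[s,t-1]\setminus B| = h_{B-s}(t-s)$; this step remains valid even if $B-s$ contains negative integers, since such integers fall outside $[0,t-s-1]$ and hence do not affect $h_{B-s}$. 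Combining the two identities and then exponentiating (applying $x \mapsto \alpha^x$ to both sides) converts the additive identity into~\eqref{eq:Memoryless}.

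I do not expect a genuine obstacle here: the computation is elementary once the tail formula $\nu_B[s,\infty) = \alpha^{h_B(s)}$ has been isolated. The only places demanding a little care are the bookkeeping in the enumeration of $B^c$ --- checking that the smallest index of $B^c$ appearing in $[s,\infty)$ is exactly $h_B(s)$, with no off-by-one error in the boundary cases $s \in B$ and $s = 0$ --- and the remark that formula~\eqref{eq:MemorylessDistribution} still defines a bona fide probability measure $\nu_{B-s}$ when the shifted set $B-s$ is no longer a subset of $\Z_+$.
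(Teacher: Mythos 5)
Your proposal is correct and follows essentially the same route as the paper: the paper's one-line proof likewise rests on the tail formula $\nu_B[t,\infty)=\alpha^{h_B(t)}$ together with the additive identity $h_B(s+t)=h_B(s)+h_{B-s}(t)$, which is your splitting-and-translating argument in disguise. You merely spell out the ``direct computation'' the paper leaves implicit, including the harmless case where $B-s$ meets the negative integers.
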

\begin{proof}
By applying the identity $h_B(s+t) = h_B(s) + h_{B-s}(t)$, one can check by a direct computation
that $\nu_B[t,\infty) = \alpha^{h_B(t)}$ and $\nu_{B-s}[t-s,\infty) = \alpha^{h_B(t)-h_B(s)}$ for
all positive integers $s$ and $t$ such that $s \le t$.
\end{proof}
Observe that for $B = \emptyset$, formula~\eqref{eq:Memoryless} is the standard memoryless
property on $\Z_+$, and the memoryless distribution $\nu_\emptyset$ on $B^c = \Z_+$ is the
geometric distribution on $\Z_+$ with success probability $\alpha$.

The memoryless distribution on $A^c$ can be represented in terms of a geometrically distributed
random variable by using the following function. We define for a given finite $A \subset \Z_+$ the
\emph{$A$-avoiding shift} by the formula
\begin{equation}
 \label{eq:DefShift}
 \theta_A(x) = \min \{n \ge 0: | \, [0,n] \setminus A| \, \ge \, x+1\}, \quad x \in \Z_+.
\end{equation}
The $A$-avoiding shift is the natural bijection from $\Z_+$ onto the complement of $A$, and
$\theta_A(x)$ equals the $(x+1)$-th element of $A^c$ (see Figure~\ref{fig:Shift}). Some basic
technical facts about this map that are needed in the sequel are listed in
Appendix~\ref{sec:ShiftTechnical}.

\begin{figure}[h]
\small
\begin{center}
  \psset{unit=1.2cm}
  \begin{pspicture}(0,-.1)(5,1.5)

    \psline[linewidth=.8pt]{->}(0,1)(5.5,1)
    \psline[linewidth=.8pt](0,1.1)(0,.9)
    \psline[linewidth=.8pt](1,1.1)(1,.9)
    \psline[linewidth=.8pt](2,1.1)(2,.9)
    \psline[linewidth=.8pt](3,1.1)(3,.9)
    \psline[linewidth=.8pt](4,1.1)(4,.9)
    \psline[linewidth=.8pt](5,1.1)(5,.9)

    \psline[linewidth=.8pt]{->}(0,0)(5.5,0)
    \psline[linewidth=.8pt](1,.1)(1,-.1)
    \psline[linewidth=.8pt](4,.1)(4,-.1)
    \psline[linewidth=.8pt](5,.1)(5,-.1)

    \psdots[dotstyle=o](0,0)(2,0)(3,0)

    \psline{->}(0,1)(1,0)
    \psline{->}(1,1)(4,0)
    \psline{->}(2,1)(5,0)

    \uput[90](0,1){$0$}
    \uput[90](1,1){$1$}
    \uput[90](2,1){$2$}
    \uput[90](3,1){$3$}
    \uput[90](4,1){$4$}
    \uput[90](5,1){$5$}

    \uput[-90](1,0){$\theta_A(0)$}
    \uput[-90](4,0){$\theta_A(1)$}
    \uput[-90](5,0){$\theta_A(2)$}
  \end{pspicture}
\end{center}
\caption{\label{fig:Shift} The action of the $A$-avoiding shift for $A = \{0,2,3\}$.}
\end{figure}
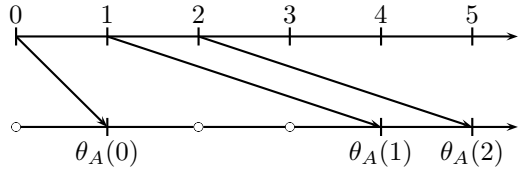

\begin{lemma}
\label{the:ShiftDistribution}
Let $B$ be a finite subset of $\Z_+$, and let $\xi$ be a geometric random variable on $\Z_+$ with
success probability $\alpha \in (0,1)$. Then the random variable $\theta_B(\xi)$ has the
memoryless distribution~\eqref{eq:MemorylessDistribution} on $B^c$.
\end{lemma}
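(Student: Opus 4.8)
The plan is to obtain the law of $\theta_B(\xi)$ as the pushforward of the geometric law under the bijection $\theta_B$. First I would isolate the two elementary facts about the $A$-avoiding shift that the argument rests on, both of which follow directly from the definition \eqref{eq:DefShift} (and are among the facts collected in Appendix~\ref{sec:ShiftTechnical}): that $\theta_B$ maps $\Z_+$ bijectively onto $B^c$, and that $\theta_B(x)$ is the $(x+1)$-st smallest element of $B^c$, so that exactly $x$ points of $B^c$ lie strictly below it. The second fact can be restated as the identity $h_B(\theta_B(x)) = x$ for all $x \in \Z_+$, i.e.\ the inverse of $\theta_B \colon \Z_+ \to B^c$ is $y \mapsto h_B(y)$.

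Given these, the argument is a one-line change of variables. Since $\theta_B$ takes values in $B^c$, for every $y \in B$ we have $\pr(\theta_B(\xi) = y) = 0$, which matches the second line of \eqref{eq:MemorylessDistribution}. For $y \in B^c$, set $x = h_B(y)$, the unique preimage $\theta_B^{-1}(y) \in \Z_+$; by injectivity of $\theta_B$ the event $\{\theta_B(\xi) = y\}$ coincides with the event $\{\xi = x\}$, and since $\pr(\xi = x) = (1-\alpha)\alpha^{x}$ we get
\[
 \pr(\theta_B(\xi) = y) \ = \ \pr(\xi = x) \ = \ (1-\alpha)\alpha^{x} \ = \ (1-\alpha)\alpha^{h_B(y)},
\]
which is exactly the first line of \eqref{eq:MemorylessDistribution}. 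Hence $\theta_B(\xi)$ has the memoryless distribution on $B^c$ with parameter $\alpha$.

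There is essentially no obstacle here beyond bookkeeping; the only point demanding care is the off-by-one in the indexing convention, namely confirming that $\theta_B^{-1}(y)$ equals $h_B(y) = |[0,y-1]\setminus B|$ rather than $|[0,y]\setminus B|$. This is settled by noting that $y$ itself lies in $B^c$ and so is excluded from the count defining $h_B(y)$, consistent with $y$ being the $(h_B(y)+1)$-st element of $B^c$. Once this is pinned down, the pushforward computation above closes the proof.
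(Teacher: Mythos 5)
Your proposal is correct and follows essentially the same route as the paper: the paper's proof likewise observes that $\theta_B$ maps $\Z_+$ onto $B^c$ (so $\pr(\theta_B(\xi)\in B)=0$) and then invokes the identity $\theta_B^{-1}(x)=h_B(x)$ for $x\in B^c$ (Lemma~\ref{the:ShiftInverse}) to push forward the geometric law. Your extra remark pinning down the off-by-one in $h_B$ is exactly the content of that appendix lemma, so nothing is missing.
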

\begin{proof}
Because $\theta_B$ maps $\Z_+$ onto $B^c$, it is clear that $\pr(\theta_B(\xi) \in B) = 0$. The
claim thus follows immediately after noting that $\theta_B^{-1}(x) = h_B(x)$ for $x \in B^c$ by
Lemma~\ref{the:ShiftInverse}.
\end{proof}

\subsection{Noncolliding union of geometric random variables}
\label{sec:Union}

The \emph{noncolliding union} of a finite (possibly empty) set $A \subset \Z_+$ and (possibly
nondistinct) points $x_1,\dots,x_n \in \Z_+$ is denoted by
\begin{equation}
 \label{eq:DefUnion}
 U(A,x_1,\dots,x_n)
\end{equation}
and defined using the $A$-avoiding shift~\eqref{eq:DefShift} recursively by
\begin{align*}
 U(A, x_1) &= A \cup \{ \theta_A(x_1) \}, \\
 U(A,x_1,\dots,x_{k+1}) &= U(A,x_1,\dots,x_k) \cup \left\{ \theta_{U(A,x_1,\dots,x_k)}(x_{k+1}) \right\}.
\end{align*}
The \emph{noncolliding union} of points $x_1,\dots,x_n \in \Z_+$ is defined by substituting $A =
\emptyset$ into~\eqref{eq:DefUnion}, and denoted by
\begin{equation}
 \label{eq:DefUnionEmpty}
 U(x_1,\dots,x_n).
\end{equation}
By construction, $U(A,x_1,\dots,x_n)$ is a set with $|A|+n$ elements which contains $A$ but not
necessarily $x_1,\dots,x_n$. Elementary properties of the noncolliding union are listed in
Appendix~\ref{sec:UnionTechnical}.

Fix a parameter $\alpha \in (0,1)$, and let $\xi_1,\xi_2,\dots$ be independent random variables
all having the geometric distribution on $\Z_+$ with success probability $\alpha$. We will denote
the $n$-element random set obtained as the noncolliding union of $\xi_1,\dots,\xi_n$ by
\begin{equation}
 \label{eq:RandomSetEmpty}
 G_n = U(\xi_1,\dots,\xi_n).
\end{equation}
In addition, the noncolliding union of a finite $B \subset \Z_+$ and $\xi_1,\dots,\xi_n$ will be
denoted by
\begin{equation}
 \label{eq:RandomSet}
 G_n(B) = U(B,\xi_1,\dots,\xi_n).
\end{equation}
The following two lemmas extend well-known facts for geometric random variables to the random sets
$G_n$ and $G_n(B)$.

\begin{lemma}
\label{the:RandomSetPositive}
For any finite (possibly empty) $B \subset \Z_+$ such that $0 \notin B$, the random set $G_n(B)$
defined by~\eqref{eq:RandomSet} satisfies
\[
 \E\{ f(G_n(B)) \, | \, 0 \notin G_n(B) \} = \E f( G_n(B-1) + 1)
\]
for all bounded functions $f$ on $S_{|B|+n}$.
\end{lemma}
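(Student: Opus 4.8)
The plan is to reduce the assertion to a single deterministic identity for noncolliding unions, combined with the memorylessness of the geometric law. Write $A_0=B$ and $A_k=U(B,\xi_1,\dots,\xi_k)=A_{k-1}\cup\{\theta_{A_{k-1}}(\xi_k)\}$, so that $G_n(B)=A_n$. First I would pin down the conditioning event. Since $0\notin B$, the point $0$ is the smallest element of $B^c$, and an easy induction on $k$ shows that $0$ stays the smallest element of $A_{k-1}^c$ as long as $\xi_1,\dots,\xi_{k-1}$ are all at least $1$; because $\theta_A$ is strictly increasing with $\theta_A(0)=\min A^c$, the value $0$ is adjoined to $A_k$ at step $k$ precisely when $\xi_k=0$ (given $0\notin A_{k-1}$). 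Hence $\{0\notin G_n(B)\}=\bigcap_{i=1}^n\{\xi_i\ge 1\}$, an event of probability $\alpha^n>0$, so the conditional expectation in the statement is well defined.

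Next I would condition on this event and invoke memorylessness. By independence of the $\xi_i$ and the memoryless property of the geometric distribution (the $B=\emptyset$ case of Proposition~\ref{the:Memoryless}), the conditional law of $(\xi_1,\dots,\xi_n)$ given $\bigcap_i\{\xi_i\ge1\}$ equals the law of $(\xi_1'+1,\dots,\xi_n'+1)$, where $\xi_1',\dots,\xi_n'$ are again independent geometric variables with parameter $\alpha$. Therefore
\[
 \E\{f(G_n(B))\mid 0\notin G_n(B)\}=\E f\bigl(U(B,\xi_1'+1,\dots,\xi_n'+1)\bigr),
\]
and it remains to identify the right-hand side with $\E f(G_n(B-1)+1)$.

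For that I would establish the purely combinatorial identity
\[
 U(B,x_1+1,\dots,x_n+1)=U(B-1,x_1,\dots,x_n)+1\qquad\text{whenever }0\notin B\text{ and }x_1,\dots,x_n\in\Z_+,
\]
after which substituting $x_i=\xi_i'$ and noting $U(B-1,\xi_1',\dots,\xi_n')=G_n(B-1)$ completes the argument. The identity follows by induction on $n$ from the shift relation $\theta_B(x+1)=\theta_{B-1}(x)+1$ for $0\notin B$: since $0$ is the first element of $B^c$, the $(x+2)$-th element of $B^c$ is exactly one more than the $(x+1)$-th element of $(B-1)^c$. In the inductive step one uses that any $C\subset\Z_+$ satisfies $0\notin C+1$ and $(C+1)-1=C$, so the shift relation can be reapplied at each stage with $C+1$ playing the role of $B$. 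Both of these are among the elementary facts collected in Appendices~\ref{sec:ShiftTechnical} and~\ref{sec:UnionTechnical}.

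The only real work is this combinatorial identity together with the description of $\{0\notin G_n(B)\}$ in the first step; each is a short induction once the behaviour of $\theta_A$ is in hand, so I anticipate no genuine obstacle beyond the bookkeeping needed to track which intermediate sets $A_k$ still avoid $0$.
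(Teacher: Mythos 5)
Your proposal is correct and follows essentially the same route as the paper: identify $\{0\notin G_n(B)\}$ with $\{\xi_1\ge 1,\dots,\xi_n\ge 1\}$ (the paper cites Lemma~\ref{the:UnionMin}), replace the conditioned geometrics by $\xi_i'+1$ via memorylessness, and absorb the shift into the noncolliding union via $U(A+1,x_1+1,\dots,x_n+1)=U(A,x_1,\dots,x_n)+1$ applied with $A=B-1$ (Lemma~\ref{the:UnionShift}). The only difference is that you re-derive the two appendix lemmas instead of citing them.
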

\begin{proof}
By Lemma~\ref{the:UnionMin}, $0 \notin G_n(B)$ if and only if $\min \{\xi_1,\dots,\xi_n\} > 0$.
Therefore,
\[
 \E\{ f(G_n(B)) \, | \, 0 \notin G_n(B) \}
 = \E \left\{ f(U(B,\xi_1,\dots,\xi_n)) \, | \, \xi_1 > 0, \dots \xi_n > 0 \right\}.
\]
Because the geometric random variable $\xi_k$, conditioned on being strictly positive, has the
same distribution as $\xi_k + 1$, and because $\xi_1,\dots,\xi_n$ are independent, it follows that
\[
 \E\{ f(G_n(B)) \, | \, 0 \notin G_n(B) \}
 = \E f(U(B,\xi_1+1,\dots,\xi_n+1)),
\]
so the claim follows by Lemma~\ref{the:UnionShift}.
\end{proof}

\begin{lemma}
\label{the:RandomSetZero}
For any finite (possibly empty) $B \subset \Z_+$ such that $0 \notin B$, the random set $G_n(B)$
defined by~\eqref{eq:RandomSet} satisfies
\[
 \E \left\{ f(G_n(B)^*) \, | \, 0 \in G_n(B) \right\}
 = \E f(G_{n-1}(B-1)+1)
\]
for all bounded functions $f$ on $S_{|B|+n-1}$.
\end{lemma}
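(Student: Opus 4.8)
The plan is to mirror the structure of the proof of Lemma~\ref{the:RandomSetPositive}, but since the event $\{0 \in G_n(B)\}$ does not factorize over the coordinates $\xi_1, \ldots, \xi_n$, I would first split it according to which geometric variable is responsible for producing the element $0$. Because $0 \notin B$, Lemma~\ref{the:UnionMin} gives $0 \in G_n(B)$ if and only if $\min\{\xi_1, \ldots, \xi_n\} = 0$, i.e.\ some $\xi_i = 0$; on this event put $T = \min\{k : \xi_k = 0\}$, so that $\{0 \in G_n(B)\}$ is the disjoint union of the events $\{T = k\}$, $k = 1, \ldots, n$. It then suffices to show that the conditional law of $G_n(B)^*$ given $\{T = k\}$ does not depend on $k$ and equals the law of $G_{n-1}(B-1)+1$, after which averaging over $T$ concludes the proof.

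Fix $k$ and work on $\{T = k\}$. Then $\xi_1, \ldots, \xi_{k-1} > 0$, so $C := U(B, \xi_1, \ldots, \xi_{k-1})$ still avoids $0$; since $\theta_C(0) = 0$, adjoining $\xi_k = 0$ gives $U(B, \xi_1, \ldots, \xi_k) = C \cup \{0\}$, and hence $G_n(B) = U(C \cup \{0\}, \xi_{k+1}, \ldots, \xi_n)$, a set whose minimum is $0$, so that $G_n(B)^* = G_n(B) \setminus \{0\}$. The key geometric fact I would establish here (of the kind collected in Appendix~\ref{sec:UnionTechnical}) is that whenever $0 \in A$ one has $\theta_A(x) = \theta_{A \setminus \{0\}}(x+1)$ — because $A^c$ is obtained from $(A \setminus \{0\})^c$ by deleting its smallest element $0$ — and consequently, by induction on the number of adjoined points, $U(A, y_1, \ldots, y_m) \setminus \{0\} = U(A \setminus \{0\}, y_1+1, \ldots, y_m+1)$. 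Applying this with $A = C \cup \{0\}$ yields $G_n(B)^* = U(C, \xi_{k+1}+1, \ldots, \xi_n+1) = U(B, \xi_1, \ldots, \xi_{k-1}, \xi_{k+1}+1, \ldots, \xi_n+1)$.

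Now I would invoke the memoryless property of the geometric distribution. Conditioned on $\{T = k\}$ the variables $\xi_1, \ldots, \xi_{k-1}$ remain independent, and each, being a geometric variable conditioned to be strictly positive, has the law of $\zeta + 1$ for $\zeta$ geometric; the variables $\xi_{k+1}, \ldots, \xi_n$ are unconditioned geometrics and already enter the displayed formula shifted up by $1$, and they are independent of the first group. Hence, conditionally on $\{T = k\}$, $G_n(B)^*$ has the law of $U(B, \zeta_1+1, \ldots, \zeta_{n-1}+1)$ with $\zeta_1, \ldots, \zeta_{n-1}$ i.i.d.\ geometric; since $0 \notin B$ we write $B = (B-1)+1$ and apply Lemma~\ref{the:UnionShift} to rewrite this as $U(B-1, \zeta_1, \ldots, \zeta_{n-1}) + 1 = G_{n-1}(B-1) + 1$. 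As this conditional law does not depend on $k$, summing the identities $\E\{f(G_n(B)^*) \mid T = k\} = \E f(G_{n-1}(B-1)+1)$ against the conditional probabilities $\pr(T = k \mid 0 \in G_n(B))$ gives the assertion.

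The main obstacle is the purely combinatorial identity $U(A, y_1, \ldots, y_m)\setminus\{0\} = U(A\setminus\{0\}, y_1+1, \ldots, y_m+1)$ for $0 \in A$: this is what lets us track where the element $0$ goes when it is deleted and how its removal reindexes the remaining avoiding shifts. Once that identity and the already available shift relation (Lemma~\ref{the:UnionShift}) are in hand, the rest is bookkeeping with independence and the scalar memoryless property of the geometric law.
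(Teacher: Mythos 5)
Your proof is correct, but it takes a genuinely different route from the paper's. The paper argues by induction on $n$, splitting the event $\{0 \in G_{n+1}(B)\}$ into $\Omega_1 = \{0 \notin G_n(B),\ \xi_{n+1}=0\}$ and $\Omega_2 = \{0 \in G_n(B)\}$, handling $\Omega_1$ via Lemma~\ref{the:RandomSetPositive} and $\Omega_2$ via a single application of Lemma~\ref{the:UDelete} plus the induction hypothesis. You instead condition on the first index $T=k$ with $\xi_k=0$ and derive the explicit representation $G_n(B)^* = U(B,\xi_1,\dots,\xi_{k-1},\xi_{k+1}+1,\dots,\xi_n+1)$ on $\{T=k\}$, after which the scalar memoryless property and Lemma~\ref{the:UnionShift} finish the job. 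Your key combinatorial identity $U(A,y_1,\dots,y_m)\setminus\{0\} = U(A\setminus\{0\},y_1+1,\dots,y_m+1)$ for $0\in A$ is exactly the iterated form of Lemma~\ref{the:UDelete} in the case $\min A = 0$ (where the condition $x \ge \min A$ holds automatically), so it is available with a one-line induction; all other ingredients (Lemma~\ref{the:UnionMin} to identify $\{0\in G_n(B)\}$ with $\{\min_i \xi_i = 0\}$, the factorization of $\{T=k\}$ over independent coordinates) check out, including the degenerate case $n=1$. What your approach buys is a stronger, more explicit conclusion --- the conditional law of $G_n(B)^*$ given $T=k$ is the same for every $k$, and you never need Lemma~\ref{the:RandomSetPositive} --- at the cost of a slightly heavier combinatorial identity; the paper's induction is shorter on the set-algebra side but delivers only the averaged statement.
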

\begin{proof}
With the interpretation $G_0(B-1) = B-1$, the claim makes sense also for $n=1$. In this case the
claim follows by observing that $G_1(B)^* = B$ on the event that $\xi_1=0$. To proceed by
induction, assume that the claim holds for some $n$. By definition, $G_{n+1}(B) = U(G_n(B),
\xi_{n+1})$, so we can split the event $\{ 0 \in G_{n+1} \} $ into disjoint events
\[
 \Omega_1
 = \{ 0 \notin G_n(B), \ \xi_{n+1} = 0\}
 \quad \text{and} \quad
 \Omega_2 = \{ 0 \in G_n(B) \}.
\]

Let us first analyze the event $\Omega_1$. On this event, $G_{n+1}(B)^* = G_n(B)$, so by
Theorem~\ref{the:RandomSetPositive} and the independence of $G_n(B)$ and $\xi_{n+1}$, we see that
\begin{equation}
\label{eq:Omega1}
\begin{aligned}
 \E \{ f(G_{n+1}(B)^*) \, | \, \Omega_1 \}
 &= \E \{ f(G_n(B)) \, | \, 0 \notin G_n(B) \} \\
 &= \E f(G_n(B-1)+1).
\end{aligned}
\end{equation}

Let us next analyze the event $\Omega_2$. In light of Lemma~\ref{the:UDelete}, we see that on this
event,
\[
 G_{n+1}(B)^* = U(G_n(B),\xi_{n+1})^* = U(G_n(B)^*, \xi_{n+1} + 1).
\]
Because $G_n(B)^*$ and $\xi_{n+1}$ are independent, we obtain by conditioning on $\xi_{n+1}$,
applying the induction assumption to the map $A \mapsto f(U(A,\xi_{n+1}+1))$, and then using
Lemma~\ref{the:UnionShift} that
\begin{align*}
 \E \{ f(G_{n+1}(B)^*) \, | \, \Omega_2 \}
 &= \E \{ f(U(G_n(B)^*, \xi_{n+1} + 1)) \, | \, 0 \in G_n(B) \} \\
 &= \E f(U(G_{n-1}(B-1) + 1, \xi_{n+1} + 1)) \\
 &= \E f(U(G_{n-1}(B-1), \xi_{n+1})+1).
\end{align*}
Note that
\[
 U(G_{n-1}(B-1), \xi_{n+1}) \eqst U(G_{n-1}(B-1),\xi_n) = G_n(B-1),
\]
because $\xi_n$ and $\xi_{n+1}$ have the same distribution and both random variables are
independent of $G_{n-1}(B-1)$. 
(We use the notation $X \eqst Y$ to mean that the random elements
$X$ and $Y$ have the same distribution.)
This implies that
\begin{equation}
 \label{eq:Omega2}
  \E \{ f(G_{n+1}(B)^*) \, | \, \Omega_2 \}
  = \E f(G_n(B-1) + 1).
\end{equation}
By combining~\eqref{eq:Omega1} and \eqref{eq:Omega2}, we conclude that the claim is true for
$n+1$. This completes the induction step, and thus also the proof.
\end{proof}

\subsection{One-step evolution}

Assuming that the height distributions are memoryless as in~\eqref{eq:MemorylessDistribution}, we
see with the help of Lemma~\ref{the:ShiftDistribution} that the JEP may be constructed using the
noncolliding union defined by~\eqref{eq:DefUnion} according to
\begin{equation}
 \label{eq:NextStep}
 X_{t+1} = \left\{
 \begin{aligned}
   X_t - 1,                 & \quad 0 \notin X_t,\\
   U(X_t^* - 1, \xi_{t+1}), & \quad 0 \in X_t,
 \end{aligned}
 \right.
\end{equation}
where $\xi_1,\xi_2,\dots$ are independent geometric random variables in $\Z_+$ with success
probability $\alpha$, independent of the initial state $X_0$.

The following key technical result describes the one-step evolution of the JEP with memoryless
height distributions, when started at a random initial state distributed according to $G_k(B)$
defined in~\eqref{eq:RandomSet}.

\begin{lemma}
\label{the:key}
If $X_0 \eqst G_k(B)$, then the JEP with memoryless height distributions at time $1$ is
distributed according to
\[
 X_{1} \eqst \left\{
  \begin{aligned}
   G_k(B-1),       & \quad 0 \notin B, \\
   G_{k+1}(B^*-1), & \quad 0 \in B.
  \end{aligned}
 \right.
\]
\end{lemma}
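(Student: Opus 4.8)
The plan is to condition on whether $0\in X_0$ and combine the update rule~\eqref{eq:NextStep} with the two distributional identities for $G_n(B)$ established in Lemmas~\ref{the:RandomSetPositive} and~\ref{the:RandomSetZero}. First I would treat the case $0\notin B$. Since $X_0\eqst G_k(B)$ with $0\notin B$, Lemma~\ref{the:UnionMin} tells us that $0\in X_0$ precisely when $\min\{\xi_1,\dots,\xi_k\}=0$; on the complementary event $0\notin X_0$ and the transition rule gives $X_1=X_0-1$, while on the event $0\in X_0$ the rule gives $X_1=U(X_0^*-1,\xi)$ for a fresh geometric $\xi$. I would then compute $\E f(X_1)$ for an arbitrary bounded $f$ by splitting on these two events, using $\pr(0\notin X_0)$ and $\pr(0\in X_0)$ as weights, and plug in the conditional expectations: on $\{0\notin X_0\}$ we get $\E\{f(X_0-1)\mid 0\notin X_0\}=\E f(G_k(B-1))$ by Lemma~\ref{the:RandomSetPositive} (applied to the shifted function $A\mapsto f(A-1)$), and on $\{0\in X_0\}$ we get, after conditioning on $\xi$ and applying Lemma~\ref{the:RandomSetZero} to $A\mapsto f(U(A-1,\xi))$ together with Lemma~\ref{the:UnionShift} to pull the $-1$ outside, an expression of the form $\E f(U(G_{k-1}(B-1),\xi))$. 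The point is that both branches must reassemble into $\E f(G_k(B-1))$: the second branch recombines because $U(G_{k-1}(B-1),\xi)\eqst U(G_{k-1}(B-1),\xi_k)=G_k(B-1)$, since $\xi$ is an independent geometric copy. Taking the weighted average then collapses to $\E f(G_k(B-1))$, which is the claim.

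For the case $0\in B$, I would argue similarly but now $0\in X_0$ always holds (since $B\subset X_0$ and $0\in B$), so only the second branch of~\eqref{eq:NextStep} is active: $X_1=U(X_0^*-1,\xi)$. Writing $X_0\eqst G_k(B)$ and noting $0\in G_k(B)$ is automatic, I would apply Lemma~\ref{the:RandomSetZero} — in the form valid when $0\in B$, which may require first isolating $\min B=0$ and writing $G_k(B)^*$ in terms of $G_k(B^*)$ via Lemma~\ref{the:UnionMin}/Lemma~\ref{the:UDelete} — to identify the conditional law of $X_0^*$, then use Lemma~\ref{the:UnionShift} to handle the shift by $-1$, and finally recombine the extra geometric $\xi$ with the existing ones exactly as above to land on $G_{k+1}(B^*-1)$. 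The bookkeeping of indices (why $k$ geometrics plus one new jump yield $k+1$ geometrics over the base set $B^*-1$) should fall out of counting elements: $|G_k(B)|=|B|+k$, deleting the minimum and adding one jump keeps the cardinality at $|B|+k=|B^*-1|+(k+1)$.

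The main obstacle I anticipate is the case $0\in B$: Lemmas~\ref{the:RandomSetPositive} and~\ref{the:RandomSetZero} are stated under the hypothesis $0\notin B$, so I cannot invoke them directly. I would need a short preliminary reduction expressing $G_k(B)$ when $0\in B$ in terms of $G_k(B')$ with $0\notin B'$ — the natural candidate is to peel off the forced element $0$ and work with $B^*$, using the appendix facts on the noncolliding union (Lemma~\ref{the:UnionMin}, Lemma~\ref{the:UDelete}, Lemma~\ref{the:UnionShift}) to justify $G_k(B)^*=$ (something built from $G_k(B^*)$ or $G_{k}(B^*-1)$ after shifting). Getting this reduction stated cleanly, and making sure the independence of the fresh jump variable $\xi_{t+1}$ from $X_0$ is used correctly when recombining, is where the real care is needed; the rest is assembling the pieces and a cardinality check. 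A secondary, purely cosmetic, point is to be consistent about whether one argues via "$\E f(\cdot)$ for all bounded $f$" (cleanest, since that is the form of the cited lemmas) rather than manipulating the set-valued random variables directly.
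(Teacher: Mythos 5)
Your proposal is correct and follows essentially the same route as the paper: for $0\notin B$ you condition on whether $0\in X_0$, apply Lemmas~\ref{the:RandomSetPositive} and~\ref{the:RandomSetZero} to the appropriately composed test functions, and recombine via $U(G_{k-1}(B-1),\xi)\eqst G_k(B-1)$, exactly as the authors do. The obstacle you flag in the case $0\in B$ dissolves precisely along the lines you sketch, and in fact no probabilistic lemma is needed there: since $\min U(B,\xi_1,\dots,\xi_j)=0$ at every stage (Lemma~\ref{the:UnionMin}), iterating Lemma~\ref{the:UDelete} gives the pathwise identity $G_k(B)^*=U(B^*,\xi_1+1,\dots,\xi_k+1)$, after which Lemma~\ref{the:UnionShift} and the recursive definition of $U$ yield $U(G_k(B)^*-1,\xi_{k+1})=U(B^*-1,\xi_1,\dots,\xi_{k+1})=G_{k+1}(B^*-1)$ deterministically.
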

\begin{proof}
Assume first that $0 \notin B$. Fix a bounded function $f$. Then by first
using~\eqref{eq:NextStep} and then Lemma~\ref{the:RandomSetPositive}, we find that
\begin{align*}
 \E \left\{ f(X_{1}) \, | \, 0 \notin X_0 \right\}
 &= \E \left\{ f(G_k(B)-1) \, | \, 0 \notin G_k(B) \right\} \\
 &= \E f(G_k(B-1)).
\end{align*}
Further, because in~\eqref{eq:NextStep} the random variables $\xi_{t+1}$ and $X_t^* - 1$ are
independent,
\[
 \E \left\{ f(X_{1}) \, | \, 0 \in X_0 \right\}
 = \E \left\{ f(U(G_k(B)^* - 1, \xi_{k+1})) \, | \, 0 \in G_k(B) \right\},
\]
so by Lemma~\ref{the:RandomSetZero} and independence,
\begin{align*}
 \E \left\{ f(X_{1}) \, | \, 0 \in X_0 \right\}
 &= \E f(U(G_{k-1}(B-1), \xi_{k+1})) \\
 &= \E f(G_k(B-1)).
\end{align*}
By combining these two observations, we conclude that the claim holds when $0 \notin B$.

Let us next analyze the case where $0 \in B$. Then with the help of Lemma~\ref{the:UDelete} we see
that
\begin{align*}
 X_{1}
 &\eqst U( G_k(B)^*-1, \xi_{k+1}) \\
 &= U( U(B^*,\xi_1+1,\dots,\xi_k+1) - 1, \xi_{k+1}) \\
 &= U( U(B^*-1,\xi_1,\dots,\xi_k), \xi_{k+1}) \\
 &= G_{k+1}(B^*-1).
\end{align*}
\end{proof}

\subsection{Ultrafast convergence to equilibrium}

By applying the key result, Lemma~\ref{the:key}, we may identify the equilibrium of the memoryless
JEP.

\begin{theorem}
\label{the:GeometricEquilirium}
The JEP generated by the memoryless height distributions~\eqref{eq:MemorylessDistribution} has a
unique equilibrium which has the same distribution as the noncolliding union $G_n =
U(\xi_1,\dots,\xi_n)$ of $n$ independent geometric random variables defined
in~\eqref{eq:RandomSetEmpty}.
\end{theorem}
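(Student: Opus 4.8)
The plan is to use Lemma~\ref{the:key} to show that the distribution of $G_n = G_n(\emptyset)$ is invariant under the one-step dynamics of the memoryless JEP, and then invoke the uniqueness part of Theorem~\ref{the:Stability}. First I would set up a decomposition over the possible smallest elements of the starting configuration. Write $X_0 \eqst G_n$, and condition on the value of $\min X_0$. Equivalently, for $j \ge 0$ consider the event that the $j$ lowest sites $0,1,\dots,j-1$ are empty while $j \in X_0$; on this event $X_0 = C$ for some $C$ with $\min C = j$, and it is convenient to peel off the shift and write $X_0 \eqst G_n$ restricted to this event in the form $X_0 = (G_{n}(\emptyset))$ with a prescribed gap at the bottom. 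More usefully, I would iterate Lemma~\ref{the:key} directly: starting from $G_n = G_n(\emptyset)$ with $0 \notin \emptyset$, one step gives $X_1 \eqst G_n(\emptyset - 1) = G_n(\emptyset) = G_n$ only if $0 \notin G_n$, so the two cases of the lemma must be combined by conditioning on whether $0 \in X_0$.

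The key computation is therefore the following. Fix a bounded function $f$ on $S_n$. Using \eqref{eq:NextStep} and splitting on $\{0 \in X_0\}$ versus $\{0 \notin X_0\}$,
\[
 \E f(X_1) = \E\{ f(X_1)\,\mathbf 1(0 \notin X_0)\} + \E\{ f(X_1)\,\mathbf 1(0 \in X_0)\}.
\]
On $\{0 \notin X_0\}$, $X_1 = X_0 - 1$, and by Lemma~\ref{the:RandomSetPositive} (with $B = \emptyset$) the conditional law of $X_0$ given $0 \notin X_0$ is that of $G_n(\emptyset - 1) + 1 = G_n + 1$, so $X_1 \eqst G_n$ conditionally on this event. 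On $\{0 \in X_0\}$, $X_1 = U(X_0^* - 1, \xi_{n+1})$, and by Lemma~\ref{the:RandomSetZero} (again $B = \emptyset$, using the independence of $\xi_{n+1}$) the conditional law of $X_0^* - 1$ given $0 \in X_0$ is that of $G_{n-1}(\emptyset - 1) = G_{n-1}$; hence conditionally on this event $X_1 \eqst U(G_{n-1}, \xi_{n+1}) = U(\xi_1,\dots,\xi_{n-1},\xi_{n+1}) \eqst G_n$. Since in both cases the conditional law of $X_1$ is that of $G_n$, the unconditional law of $X_1$ is also that of $G_n$, i.e. $\E f(X_1) = \E f(G_n)$ for all bounded $f$. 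This establishes that $\mathrm{law}(G_n)$ solves the balance equation \eqref{eq:Equilibrium}.

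It remains to argue uniqueness and convergence. I would check that the memoryless family \eqref{eq:MemorylessDistribution} satisfies the hypotheses of Theorem~\ref{the:Stability}: condition \eqref{eq:aperiodic} holds because $\nu_B(\min B^c) = (1-\alpha)\alpha^{0} = 1-\alpha > 0$, and uniform integrability \eqref{eq:UI} holds because $\nu_B[x,\infty) = \alpha^{h_B(x)} \le \alpha^{\,x-n}$ for a configuration $B$ of bounded size $n-1$ (using $h_B(x) \ge x - |B|$), so the tail sums $\sum_{x > K} x\,\nu_B(x)$ are dominated uniformly in $B$ by a convergent geometric-type series that tends to $0$. Therefore Theorem~\ref{the:Stability} applies and gives a unique equilibrium to which the process converges in total variation; combined with the invariance just proved, that unique equilibrium must be $\mathrm{law}(G_n)$. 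The main obstacle is purely bookkeeping: making sure the two branches of Lemma~\ref{the:key} are assembled correctly, in particular that the ``recycling'' identity $U(G_{n-1}, \xi_{n+1}) \eqst G_n$ is justified by exchangeability/independence of the i.i.d.\ geometric sequence, and that the $B = \emptyset$ specializations of Lemmas~\ref{the:RandomSetPositive} and~\ref{the:RandomSetZero} are legitimate (they are, since the statements allow $B$ empty and $0 \notin \emptyset$ trivially).
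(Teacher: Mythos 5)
Your proof is correct and follows essentially the same route as the paper: the paper simply applies Lemma~\ref{the:key} with $B=\emptyset$ (whose case split is on whether $0\in B$, not on whether $0\in X_0$, so the first branch applies directly and yields $X_1\eqst G_n$ with no further conditioning) and then invokes Theorem~\ref{the:Stability} for uniqueness. Your decomposition over $\{0\in X_0\}$ versus $\{0\notin X_0\}$ merely re-derives the $0\notin B$ case of Lemma~\ref{the:key} from Lemmas~\ref{the:RandomSetPositive} and~\ref{the:RandomSetZero}, and your explicit verification that the memoryless family satisfies \eqref{eq:aperiodic} and uniform integrability is a welcome detail that the paper leaves implicit.
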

\begin{proof}
Assume that the JEP is started at a random initial state distributed according to $G_n$. Then by
Lemma~\ref{the:key} applied with $B = \emptyset$, it follows that $X_1 \eqst G_n$. By the Markov
property, it follows that $X_t \eqst G_n$ for all $t$. The uniqueness of the equilibrium
distribution follows from Theorem~\ref{the:Stability}.
\end{proof}

Lemma~\ref{the:key} also allows to make a stronger conclusion on the precise evolution of the JEP
started at an arbitrary nonrandom initial state. Consider a JEP started at a nonrandom initial
state $A$ with $n$ elements. We will denote by $A_k$ the set obtained by deleting $k$ smallest
elements from $A$, and by $\tau_k$ the time instant when the $k$-th smallest particle from the
initial configuration has just jumped, so that $\tau_1 = \min A + 1$ and $\tau_k = \min A_{k-1} +
1$ for $k = 2,\dots,n$. The following result shows that the memoryless JEP reaches its equilibrium
at the nonrandom time instant $\tau_n$ when all particles have jumped once.

\begin{theorem}
\label{the:Convergence}
Let $(X_t)_{t\in\Z_+}$ be a JEP generated by memoryless height
distributions~\eqref{eq:MemorylessDistribution} and started at a nonrandom initial state $A \in
S_n$. The distribution $X_t$ at any time $t$ can be represented by
\[
 X_t \eqst \left\{
 \begin{aligned}
   A - t,                         & \quad \text{for $t < \tau_1$}, \\
   U(A_k - t, \xi_1,\dots,\xi_k), & \quad \text{if $\tau_k \le t < \tau_{k+1}$ for some $1 \le k \le
   n-1$},\\
   U(\xi_1,\dots,\xi_n),          & \quad \text{for $t \ge \tau_n$}.
 \end{aligned}
 \right.
\]
\end{theorem}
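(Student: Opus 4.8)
The plan is to prove the three-case formula for $X_t$ by induction on $t$, using Lemma~\ref{the:key} as the driving engine. The key observation is that each of the three regimes in the claimed formula is of the form $G_j(C)$ for a suitable finite set $C$ and index $j$: indeed $A-t = G_0(A-t)$, the middle case is $G_k(A_k - t)$, and $G_n = G_n(\emptyset)$. So the entire statement says that at every time $t$, the law of $X_t$ is $G_j(C)$ for explicitly described $(j,C)$, and Lemma~\ref{the:key} tells us exactly how such a law transforms in one step depending on whether $0 \in C$ or not.

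First I would set up notation: write $m_k = \min A_k$ for $k=0,\dots,n-1$, so that $\tau_1 = m_0+1$ and $\tau_{k+1} = m_k + 1$, and note $A_k - t$ contains $0$ precisely when $t = m_k$, i.e.\ when $t = \tau_{k+1}-1$, which is the last time instant of the $k$-th regime. The base case $t=0$ is immediate since $X_0 = A = A_0 - 0 = G_0(A)$ (here if $\min A = 0$ we are already in regime $k\ge 1$; one should handle the degenerate possibility $\min A = 0$ at the outset, in which case $\tau_1 = 1$ and the first regime $t<\tau_1$ is empty). For the inductive step, suppose $X_t \eqst G_k(A_k - t)$ with $\tau_k \le t < \tau_{k+1}$ (reading $\tau_0 = 0$). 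If $t < \tau_{k+1}-1$, then $0 \notin A_k - t$, so Lemma~\ref{the:key} gives $X_{t+1} \eqst G_k((A_k-t)-1) = G_k(A_k-(t+1))$, and since $t+1$ is still in the same regime this matches the formula. If $t = \tau_{k+1}-1$, then $0 \in A_k - t$; here $(A_k - t)^* - 1 = (A_k - t \setminus \{0\}) - 1$, and because deleting the minimum $0$ of $A_k - t$ and then subtracting $1$ is exactly the operation producing $A_{k+1} - (t+1)$ (one must check $(A_k)^* = A_{k+1}$ and chase the shifts), Lemma~\ref{the:key} gives $X_{t+1} \eqst G_{k+1}((A_k-t)^*-1) = G_{k+1}(A_{k+1}-(t+1))$, putting us into regime $k+1$ at its first time $t+1 = \tau_{k+1}$, again matching the formula. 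Finally, once $k = n-1$ and we cross $\tau_n$, the same computation yields $X_{\tau_n} \eqst G_n(A_n - \tau_n) = G_n(\emptyset) = G_n$ since $A_n = \emptyset$, and Theorem~\ref{the:GeometricEquilirium} (or one more application of Lemma~\ref{the:key} with $B=\emptyset$) shows $X_t \eqst G_n$ persists for all $t \ge \tau_n$.

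The main obstacle is bookkeeping rather than conceptual depth: one must verify carefully that the set-theoretic operations $C \mapsto C-1$ and $C \mapsto C^*-1$ applied to $A_k - t$ genuinely produce $A_k - (t+1)$ and $A_{k+1}-(t+1)$ respectively, that $(A_k)^\ast = A_{k+1}$, and that the boundary indices $\tau_k$ align so the regimes tile $\Z_+$ without gaps or overlaps (including the edge cases $\min A = 0$ and $n=1$). I would also remark that the appearance of $\xi_1,\dots,\xi_k$ rather than a re-indexed family is harmless: the $\xi_i$ are i.i.d., so relabelling the geometric variables introduced at successive jump times does not change the distribution, exactly as exploited in the proof of Lemma~\ref{the:RandomSetZero}. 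With these routine checks in place, the induction closes and the theorem follows.
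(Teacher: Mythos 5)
Your proposal is correct and follows essentially the same route as the paper: both arguments iterate Lemma~\ref{the:key}, distinguishing at each step whether $0$ belongs to the deterministic part $A_k-t$ (which happens exactly at $t=\tau_{k+1}-1$), and both invoke Theorem~\ref{the:GeometricEquilirium} for persistence after $\tau_n$. You merely organize the paper's ``continue the same way'' as a formal induction on $t$ (absorbing the first jump into the $k=0$ case $G_0(A-t)=A-t$ instead of treating it separately via~\eqref{eq:NextStep}), which is a presentational rather than substantive difference.
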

\begin{proof}
First, $X_t = A - t$ for $t < \tau_1$ by the definition of $\tau_1$. When $t = \tau_1$, we see
using the representation~\eqref{eq:NextStep} that $X_t = U(A^*-t, \xi_t) \eqst U(A_1 - t, \xi_1)$.
Thereafter, by repeatedly applying Lemma~\ref{the:key}, we have $X_t = U(A_1 - t, \xi_1)$ for all
$\tau_1 \le t < \tau_2$. Note that $0 \in A_1 - t$ for $t = \tau_2-1$. Therefore, again by
Lemma~\ref{the:key}, $X_t \eqst U(A_2 - t, \xi_1, \xi_2)$ for $t = \tau_2$. By continuing the same
way, we conclude that
\[
 X_t \eqst U(A_k - t, \xi_1,\dots,\xi_k)
\]
for all $t$ such that $\tau_k \le t < \tau_{k+1}$ for some $1 \le k \le n-1$. Finally, at the time
instant $t = \tau_{n} - 1$, $X_t \eqst U(A_{n-1}-t, \xi_1,\dots,\xi_{n-1})$, where the set
$A_{n-1} - t$ only contains the zero element. By one more iteration of Lemma~\ref{the:key}, we
conclude that $X_{\tau_n} \eqst U(\xi_1,\dots,\xi_n)$. This distribution remains invariant by
Theorem~\ref{the:GeometricEquilirium}.
\end{proof}

\subsection{Distribution of the equilibrium}
\label{sec:Entropy}

Having found a stochastic representation for the equilibrium of the JEP with memoryless height
distributions as the noncolliding union of $n$ independent geometric random variables, it is
natural to ask whether there exists a nice formula for the distribution of this random set. The
following result provides an affirmative answer.

\begin{theorem}
\label{the:GibbsRealization}
The random set $G_n = U(\xi_1,\dots,\xi_n)$ defined by~\eqref{eq:RandomSetEmpty} is distributed
according to the Gibbs measure
\begin{equation}
 \label{eq:Gibbs}
 \mu_\beta(B) = Z^{-1}_{n,\beta} \, e^{-\beta H(B)}, \quad B \in S_n,
\end{equation}
where $\beta = -\log \alpha$, the potential energy of $B$ is given by $H(B) = \sum_{x \in B} x$,
and the normalizing constant (a.k.a.\ partition function) is given by
\begin{equation}
 \label{eq:PartitionFunction}
 Z_{n,\beta} = \prod_{k=1}^n \frac{e^\beta}{e^{\beta k} - 1}.
\end{equation}
\end{theorem}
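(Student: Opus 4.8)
The plan is to compute the distribution of $G_n = U(\xi_1,\dots,\xi_n)$ directly and recognize the answer as the stated Gibbs measure. I would proceed by induction on $n$, using the recursive definition $G_{n} = U(G_{n-1},\xi_n) = G_{n-1} \cup \{\theta_{G_{n-1}}(\xi_n)\}$ together with the fact (Lemma~\ref{the:ShiftDistribution}) that, conditionally on $G_{n-1}$, the new point $\theta_{G_{n-1}}(\xi_n)$ has the memoryless distribution on $G_{n-1}^c$. For the base case $n=1$, $G_1 = \{\xi_1\}$ with $\xi_1$ geometric, so $\pr(G_1 = \{x\}) = (1-\alpha)\alpha^x = (1-\alpha)e^{-\beta x}$; since $H(\{x\}) = x$ and $Z_{1,\beta} = e^\beta/(e^\beta-1) = 1/(1-\alpha)$, this matches~\eqref{eq:Gibbs}.

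For the inductive step, fix $B \in S_n$ and condition on which element of $B$ was added last. If $B = A \cup \{y\}$ with $A \in S_{n-1}$ and $y \in A^c$, then $G_n = B$ happens exactly when $G_{n-1} = A$ and $\theta_A(\xi_n) = y$, i.e.\ $\xi_n = h_A(y)$. Summing over the $n$ choices of the last point,
\[
 \pr(G_n = B) = \sum_{y \in B} \pr(G_{n-1} = B \setminus \{y\}) \, (1-\alpha)\alpha^{h_{B \setminus \{y\}}(y)}.
\]
Using the inductive hypothesis $\pr(G_{n-1} = B\setminus\{y\}) = Z_{n-1,\beta}^{-1} e^{-\beta H(B\setminus\{y\})} = Z_{n-1,\beta}^{-1} e^{-\beta(H(B)-y)}$ and $\alpha = e^{-\beta}$, this becomes
\[
 \pr(G_n = B) = (1-\alpha)\, Z_{n-1,\beta}^{-1} \, e^{-\beta H(B)} \sum_{y \in B} e^{\beta y} \, \alpha^{h_{B \setminus \{y\}}(y)}.
\]
The crux is therefore to show that the inner sum $\sum_{y \in B} e^{\beta y}\alpha^{h_{B\setminus\{y\}}(y)}$ is a constant independent of $B$ — necessarily equal to $(1-\alpha)^{-1} Z_{n,\beta}/Z_{n-1,\beta} = (1-\alpha)^{-1} e^\beta/(e^{\beta n}-1)$, i.e.\ the sum equals $1/(e^{\beta n}-1) \cdot e^\beta/(1-\alpha)$; more cleanly, after absorbing the $(1-\alpha) = 1-\alpha$ factor one needs $\sum_{y\in B}\alpha^{y - h_{B\setminus\{y\}}(y)} = \alpha/(1 - \alpha^n)$ (writing $e^{\beta y}\alpha^{h} = \alpha^{h-y}$ and reindexing).

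The main obstacle is this combinatorial identity, and I expect to handle it by a change of variables that "straightens out" $B$. Write $B = \{b_0 < b_1 < \dots < b_{n-1}\}$. For $y = b_i$, the quantity $h_{B\setminus\{b_i\}}(b_i)$ counts points of $\Z_+ \setminus (B\setminus\{b_i\})$ strictly below $b_i$; these are the $b_i - i$ "gap" positions below $b_i$ plus nothing else, so $h_{B\setminus\{b_i\}}(b_i) = b_i - i$ and hence $y - h_{B\setminus\{y\}}(y) = b_i - (b_i - i) = i$. Thus the inner sum telescopes to $\sum_{i=0}^{n-1}\alpha^i \cdot$ (a correction) — I would verify the exponent bookkeeping carefully, since one must check whether it is $\alpha^{i}$ or $\alpha^{b_i - (b_i-i)}$ with the roles of $e^{\beta y}$ and $\alpha^h$ correctly paired; in the end the $B$-dependence cancels and the sum becomes the geometric series $\sum_{i=0}^{n-1}\alpha^{i+1} = \alpha(1-\alpha^n)/(1-\alpha)$, or the analogous constant. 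Substituting back gives $\pr(G_n = B) = Z_{n,\beta}^{-1} e^{-\beta H(B)}$ with $Z_{n,\beta} = Z_{n-1,\beta}\cdot e^\beta/(e^{\beta n}-1)$, which unwinds to the product formula~\eqref{eq:PartitionFunction}. A final sanity check is that $\sum_{B \in S_n}\mu_\beta(B) = 1$, equivalently $\sum_{B \in S_n} q^{H(B)} = \prod_{k=1}^n q^k/(1-q^k)$ with $q = \alpha$, which is the classical generating-function identity for partitions into $n$ distinct parts and can be cited or proved in one line; this both confirms the value of $Z_{n,\beta}$ and closes the argument.
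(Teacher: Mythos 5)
Your plan is essentially the paper's own argument: the paper proves the more general Theorem~\ref{the:GibbsARealization} (for $G_n(A)$ with an arbitrary avoided set $A$) by exactly this induction --- condition on the last point added, apply Lemma~\ref{the:ShiftDistribution}, and observe that $h_{B\setminus\{x\}}(x)=h_B(x)$ so that the resulting sum over $x\in B$ reduces to a geometric series indexed by the rank of $x$ in $B$ --- and then sets $A=\emptyset$. The only caveats are the exponent/ratio bookkeeping you already flag: the required constant is $Z_{n-1,\beta}/\bigl((1-\alpha)Z_{n,\beta}\bigr)$ (you wrote its reciprocal) and the inner sum comes out as $\sum_{i=0}^{n-1}\alpha^{-i}$, while your closing partition identity should read $\sum_{B\in S_n}q^{H(B)}=\prod_{k=1}^{n}q^{k-1}/(1-q^k)$; none of this affects the validity of the approach.
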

\begin{proof}
The claim follows by substituting $A = \emptyset$ into the more general
Theorem~\ref{the:GibbsARealization} below.
\end{proof}

Consequently, the memoryless JEP attains maximum entropy among all $n$-element random sets with
bounded expected sample mean (e.g.\ Cover and Thomas \cite[Sec.~12.1]{Cover_Thomas_2006}). As a
byproduct, Theorem~\ref{the:GibbsRealization} also provides a computationally efficient way to
generate samples from distribution~\eqref{eq:Gibbs}. A direct computation using~\eqref{eq:Gibbs}
shows that for the memoryless JEP in equilibrium, the expected sample mean height of the particles
is given by
\[
 \E \frac{H(G_n)}{n} = \left( n^{-1} \sum_{k=1}^n \frac{k}{1-\alpha^k} \right) - 1,
\]
the probability of the ground state by
\[
 \pr(G_n = [0,n-1])
 = \alpha^{n(n+1)/2} \prod_{k=1}^n \left( \frac{1-\alpha^k}{\alpha^k} \right),
\]
and the probability that zero is occupied by
\[
 \pr(0 \in G_n) = 1 - \alpha^n.
\]
By Birkhoff's ergodic theorem, the last formula is also equal to the long-run average jump rate in
the system---a simple way to prove the formula is to observe that the minimal element of $G_n$ is
geometrically distributed with parameter $\alpha^n$ due to Lemma~\ref{the:UnionMin}.

We will next state and prove a generalization of Theorem~\ref{the:GibbsRealization}, which allows
to compute the nonequilbrium distributions of a memoryless JEP started at an arbitrary nonrandom
initial state, as described in Theorem~\ref{the:Convergence}. Denote the family of supersets of a
finite (possibly empty) set $A$ with $|A|+n$ elements by
\[
 S_{A,n} = \left\{ B \subset \Z_+: \ B \supset A, \ |B \setminus A| = n \right\}.
\]

\begin{theorem}
\label{the:GibbsARealization}
The random set $G_n(A) = U(A,\xi_1,\dots,\xi_n)$ defined by~\eqref{eq:RandomSet} is distributed
according to the Gibbs measure
\begin{equation}
 \label{eq:GibbsSuper}
 \mu_{A,\beta}(B) = Z^{-1}_{n,\beta} \, e^{-\beta H_A(B)}, \quad B \in S_{A,n},
\end{equation}
where $\beta = -\log \alpha$, $Z_{n,\beta}$ is the constant defined
by~\eqref{eq:PartitionFunction}, and $H_A(B) = \sum_{x \in B \setminus A} h_A(x)$ is the
\emph{$A$-neglecting potential energy} of $B$, where
\begin{equation}
 \label{eq:EnergyA}
 h_A(x) = |[0,x-1] \setminus A|
\end{equation}
is the number of elements in $A^c$ strictly less than $x$.
\end{theorem}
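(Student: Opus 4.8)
The plan is to prove the statement by induction on $n$, peeling off the last geometric variable at each step. The starting point is the identity $G_n(A) = U(G_{n-1}(A), \xi_n)$, which is immediate from the recursive definition of the noncolliding union together with the fact that $\xi_n$ is independent of $\xi_1,\dots,\xi_{n-1}$, hence of $G_{n-1}(A)$. The base case $n=0$ is trivial: then $G_0(A) = A$, $S_{A,0} = \{A\}$, $H_A(A) = 0$, and $Z_{0,\beta} = 1$ (empty product), so both sides equal the point mass at $A$.

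For the inductive step from $n$ to $n+1$, fix $B \in S_{A,n+1}$ and enumerate $B \setminus A = \{b_0 < b_1 < \dots < b_n\}$. The only configurations $C \in S_{A,n}$ from which $G_{n+1}(A)$ can reach $B$ are the sets $C_j := A \cup ((B \setminus A) \setminus \{b_j\})$, $0 \le j \le n$ (each $G_m(A)$ contains $A$ by construction, so this list is exhaustive). Conditioning on $G_n(A)$, and using that on $\{G_n(A) = C_j\}$ the newly added point $\theta_{C_j}(\xi_{n+1})$ has distribution $\nu_{C_j}$ independently of that event (Lemma~\ref{the:ShiftDistribution} and the independence of $\xi_{n+1}$ from $G_n(A)$), one obtains
\[
 \pr(G_{n+1}(A) = B) \ = \ \sum_{j=0}^{n} \pr(G_n(A) = C_j)\, \nu_{C_j}(b_j).
\]
By the induction hypothesis $\pr(G_n(A) = C_j) = Z_{n,\beta}^{-1}\alpha^{H_A(C_j)}$, and $\nu_{C_j}(b_j) = (1-\alpha)\alpha^{h_{C_j}(b_j)}$ by~\eqref{eq:MemorylessDistribution}, so everything reduces to bookkeeping of the exponents.

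The two identities needed are $H_A(C_j) = H_A(B) - h_A(b_j)$, immediate since $H_A$ just sums $h_A$ over the new points, and $h_{C_j}(b_j) = h_A(b_j) - j$, which holds because exactly the $j$ points $b_0,\dots,b_{j-1}$ of $A^c$ lying strictly below $b_j$ have been absorbed into $C_j$. Adding these gives $H_A(C_j) + h_{C_j}(b_j) - H_A(B) = -j$, so the sum collapses to the finite geometric series $Z_{n,\beta}^{-1}(1-\alpha)\alpha^{H_A(B)}\sum_{j=0}^{n}\alpha^{-j}$. A short computation, using $(1-\alpha)/(\alpha^{-1}-1) = \alpha$ and the product formula~\eqref{eq:PartitionFunction} in the form $Z_{n+1,\beta}^{-1} = Z_{n,\beta}^{-1}(\alpha^{-n}-\alpha)$, identifies this with $Z_{n+1,\beta}^{-1}\alpha^{H_A(B)} = \mu_{A,\beta}(B)$, completing the induction. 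No separate normalization check is required: for each $n$ the measure $\mu_{A,\beta}$ on $S_{A,n}$ is realized as the law of the random set $G_n(A)$, which automatically reproves the value~\eqref{eq:PartitionFunction} of the partition function as a byproduct.

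The base case and the geometric-series algebra are routine; the one genuinely substantive point is the counting identity $h_{C_j}(b_j) = h_A(b_j) - j$, i.e.\ correctly tracking which points of $A^c$ below $b_j$ get swallowed when $C_j$ is formed. This is the step I would take most care over, drawing on the elementary properties of the $A$-avoiding shift and of the noncolliding union collected in the appendices; once it is in place, the telescoping of the exponents into $-j$, and hence the clean geometric sum, follow mechanically.
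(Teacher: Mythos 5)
Your proof is correct and follows essentially the same route as the paper's: induction on $n$, decomposing over which element of $B\setminus A$ was added last, with the same exponent bookkeeping (your identity $h_{C_j}(b_j)=h_A(b_j)-j$ is exactly the paper's $h_{B\setminus\{x\}}(x)=h_B(x)$ combined with $h_A(x)=h_B(x)+|[0,x-1]\cap(B\setminus A)|$), so the sum collapses to the same geometric series matching the ratio $Z_{n+1,\beta}^{-1}/Z_{n,\beta}^{-1}$. The only cosmetic difference is that you anchor the induction at $n=0$ rather than $n=1$.
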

\begin{proof}
Consider first the case $n=1$. If $B = A \cup \{x\}$ for some $x \in A^c$, then $G_1(A) = B$ if
and only if $\theta_A(\xi_1) = x$. The probability of this event is $(1-\alpha) \alpha^{h_A(x)}$
by Lemma~\ref{the:ShiftDistribution}. Because $H_A(B) = h_A(x)$ for such $B$, the claim follows
for $n=1$.

Assume next that the claim is true for some $n \ge 1$. Fix a set $B \supset A$ such that
$|B\setminus A| = n+1$. Then
\begin{align*}
 \pr(G_{n+1}(A) = B)
 &= \sum_{x \in B \setminus A} \pr(G_n(A) = B \setminus \{x\}, \ \theta_{B \setminus \{x\}}(\xi_{n+1}) = x)\\
 &= \sum_{x \in B \setminus A} \pr(G_n(A) = B \setminus \{x\}) \pr( \theta_{B \setminus \{x\}}(\xi_{n+1}) = x).
\end{align*}
Because $H_A(B\setminus \{x\}) = H_A(B) - h_A(x)$ for all $x \in A^c$, it follows by the induction
assumption that
\[
 \pr(G_n(A) = B \setminus \{x\}) = Z_{n,\beta}^{-1} \, \alpha^{H_A(B) - h_A(x)}.
\]
Further,
\[
 \pr( \theta_{B \setminus \{x\}}(\xi_{n+1}) = x)
 \, = \, (1-\alpha) \alpha^{h_{B \setminus \{x\}}(x)}
\]
by Lemma~\ref{the:ShiftDistribution}. Note also that $h_{B \setminus \{x\}}(x) = h_B(x)$ and
\[
 h_A(x) = h_B(x) + | [0,x-1] \cap (B\setminus A) |.
\]
By combining these observations, we find that
\begin{equation}
 \label{eq:ShiftDistribution1}
 \pr(G_{n+1}(A) = B)
 = (1-\alpha) Z_{n,\beta}^{-1} \alpha^{H_A(B)} \sum_{x \in B \setminus A} \alpha^{-| [0,x-1] \cap (B\setminus A)
 |}.
\end{equation}
To compute the last sum, observe that $| [0,x-1] \cap (B\setminus A) |$ equals $k-1$ when $x$ is
the $k$-th smallest element in $B \setminus A$. Therefore,
\begin{equation}
 \label{eq:ShiftDistribution2}
 \sum_{x \in B \setminus A} \alpha^{-| [0,x-1] \cap (B\setminus A) |}
 = \sum_{k=1}^{n+1} \alpha^{-(k-1)}
 = \frac{1-\alpha^{-n-1}}{1-\alpha^{-1}}.
\end{equation}
By combining~\eqref{eq:ShiftDistribution1} and~\eqref{eq:ShiftDistribution2}, we conclude that the
claim is true for $n+1$.
\end{proof}

\section{Related models}
\label{sec:RelatedModels}

\subsection{Bounded JEP with uniform jumps}
\label{subsec:finite}
Consider the $n$-particle JEP where jump heights are bounded by a constant $M \ge n$, and the
particles jump into unoccupied sites in $[0,M-1]$ uniformly randomly. In this case Warrington
\cite{Warrington_2005} has shown that the equilibrium distribution is given by
\begin{equation}
 \label{eq:Warrington}
 \mu(B) = Z_{n,M}^{-1} \prod_{x \in B} \big( 1 + | [x+1,M-1] \setminus B | \big),
\end{equation}
where the normalization constant $Z_{n,M} = \left\{
\begin{smallmatrix}M+1\\M+1-n\end{smallmatrix} \right\}$ is a Stirling number of the second kind
(Stanley \cite[Corollary 2.4.2]{Stanley_1997}). This equilibrium may also be expressed as the
Gibbs measure
\[
 \mu(B) = Z_{n,M}^{-1} e^{- \beta H_M(B)},
\]
where $\beta = 1$ and
\[
 H_M(B) = - \sum_{x \in B} \log \big( 1 + | [x+1,M-1] \setminus B | \big).
\]
Recall that the memoryless JEP reaches its equilibrium when the initially highest particle jumps
for the first time. The same is not true for the bounded JEP with uniform jumps. To see this in
the one-particle case, observe that for $n=1$ the equilibrium distribution~\eqref{eq:Warrington}
reduces to
\[
 \mu(\{x\}) = Z_{1,M}^{-1}(M-x), \quad x \in [0,M-1],
\]
whereas just after each jump, the particle is uniformly distributed in $[0,M-1]$.

\subsection{Finite ASEP with a reflecting boundary}

Consider a continuous-time asymmetric simple exclusion process on $\Z_+$ consisting of $n$
identical particles, where each particle attempts a jump $x \mapsto x+1$ at rate $\lambda$ and a
jump $x \mapsto x-1$ at rate $\eta$, independently of the other particles (e.g.\
Grimmett~\cite[Sec.~10.4]{Grimmett_2010}). An attempted jump takes place if the targeted site is
unoccupied and belongs to $\Z_+$; otherwise the attempt is suppressed.

Assume that $0 < \lambda < \eta$, and let $Y$ be the Markov jump process in the Weyl chamber $W_n
= \{x \in \Z_+^n: x_1 < x_2 < \cdots < x_n\}$ which keeps track of the particle locations. If
$\tilde Y$ denotes the corresponding free process in $\Z_+^n$ where particles are allowed to jump
on top of each other, then the components of $\tilde Y$ are independent $M/M/1$ queues, and so the
equilibrium distribution of $\tilde Y$ is a product of $n$ independent geometric distributions on
$\Z_+$ with success probability $\lambda/\eta$. Further, because $\tilde Y$ is reversible, we
obtain the equilibrium of $Y$ be truncating the equilibrium of $\tilde Y$ to the Weyl chamber
$W_n$ (e.g.\ Kelly~\cite[Cor.~1.10]{Kelly_1979}). As a consequence, we see that the equilibrium of
the corresponding set-valued process in $S_n$ is the Gibbs measure~\eqref{eq:Gibbs} with $\beta =
-\log (\lambda/\eta)$.

Thus, the memoryless JEP and the finite ASEP with a reflecting boundary have the same equilibrium
distribution.

\section{Conclusions}
\label{sec:Conclusions}

This paper introduced a juggler's exclusion process (JEP) as a special discrete-time exclusion
process on the positive integers, where the motion of particles is deterministic outside the
boundary, and particles hitting the boundary jump into random unoccupied sites. We showed that a
JEP is ergodic if the jump heights are uniformly integrable and satisfy a natural aperiodicity
condition. The main part of the analysis was devoted to JEPs where the jump distributions satisfy
a set-avoiding memoryless property. Such JEPs were shown to converge to equilibrium in finite
nonrandom time, where the equilibrium is a Gibbs measure corresponding to a linear gravitational
potential. The proof of this result was based on representing the time-dependent system
configuration in terms of independent geometric random variables. As a byproduct, the proof
yielded a fast computational way to generate samples from such Gibbs measures.

There are two open problems related to this model that we find particularly interesting. First, do
we obtain an ergodic JEP if we assume that jump distributions are bounded in $L^1$ but not
uniformly integrable? Second, is it possible to write down an analytical formula for the
equilibrium of a general ergodic JEP using techniques from renewal theory, as in the special case
of one-particle JEP? The model analyzed in this paper also leaves room for many generalizations.
For example, one might consider analogous models with a continuous time parameter and fully random
particle motion as in ASEP models. Another interesting direction is to look for continuous-time
processes where particles move in continuum and jump into random locations not too close from the
other particles.

\section*{Acknowledgements}

We thank Venkat Anantharam, Matti Vihola, and Greg Warrington for helpful and inspiring
discussions. L.~Leskelä has been partially funded by the Academy of Finland.

\appendix

\section{Technical details}
\label{sec:Technical}

\subsection{Set-avoiding shift}
\label{sec:ShiftTechnical}

Recall that $\theta_A$ is the $A$-avoiding shift defined by~\eqref{eq:DefShift} in
Section~\ref{sec:MemorylessDistribution}.

\begin{lemma}
\label{the:IndexShift}
For all finite $A \subset \Z_+$ and all $x \in \Z_+$,
\[
 \theta_{A+1}(x+1) = \theta_A(x) + 1.
\]
\end{lemma}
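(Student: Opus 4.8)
The plan is to prove the identity $\theta_{A+1}(x+1) = \theta_A(x) + 1$ directly from the definition~\eqref{eq:DefShift}, which characterizes $\theta_A(x)$ as the least $n \ge 0$ with $|[0,n] \setminus A| \ge x+1$. The underlying intuition is transparent: shifting the obstacle set $A$ up by one and looking for the $(x+2)$-th hole of $A+1$ is the same as looking for the $(x+1)$-th hole of $A$ and then shifting up by one, since $0$ is automatically a hole of $A+1$ (as $0 \notin A+1$), and thereafter the holes of $A+1$ in $[1,\infty)$ are exactly the holes of $A$ in $[0,\infty)$ shifted up by one.

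First I would record the counting identity
\[
 |[0,n+1] \setminus (A+1)| \;=\; 1 + |[0,n] \setminus A| \qquad \text{for all } n \ge 0,
\]
which holds because $0 \in [0,n+1] \setminus (A+1)$ and the map $y \mapsto y+1$ is a bijection from $[0,n] \setminus A$ onto $([1,n+1]) \setminus (A+1)$. Next I would substitute this into the definition: $\theta_{A+1}(x+1)$ is the least $m \ge 0$ with $|[0,m] \setminus (A+1)| \ge x+2$. Since $|[0,0] \setminus (A+1)| = 1 < x+2$ for any $x \ge 0$, we must have $m \ge 1$, so we may write $m = n+1$ with $n \ge 0$, and the condition becomes $1 + |[0,n] \setminus A| \ge x+2$, i.e.\ $|[0,n] \setminus A| \ge x+1$. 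Thus $\theta_{A+1}(x+1) = 1 + \min\{n \ge 0 : |[0,n] \setminus A| \ge x+1\} = 1 + \theta_A(x)$, which is the claim.

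There is no real obstacle here; the only point requiring a modicum of care is the base case $n=0$ observation that forces $m \ge 1$, which is what lets the reindexing $m = n+1$ go through cleanly (otherwise one would have to argue separately that $m=0$ is impossible). The counting identity itself is a routine bijection and I would state it without belaboring the verification.
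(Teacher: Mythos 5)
Your proposal is correct and takes essentially the same route as the paper: both proofs reduce the claim to the counting identity $|[0,n+1]\setminus(A+1)| = |[0,n]\setminus A| + 1$ and then reindex the minimum in the definition~\eqref{eq:DefShift}. If anything, your explicit observation that $|[0,0]\setminus(A+1)| = 1 < x+2$ forces the minimizer to be at least $1$ is a touch more careful than the paper's version of the reindexing step.
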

\begin{proof}
Because the equations
\begin{align*}
& \min \{n \ge 0: |\,[0,n]\setminus A\,| \ge x+1 \} \\
& = \min \{n \ge 0: |\,[0,n+1]\setminus A\,| \ge x+1 \} + 1
\end{align*}
and
\[
 |\, [0,n+1] \setminus (A+1) \,| = |\, [0,n]\setminus A| + 1
\]
hold for all $A \subset \Z_+$ and all $x \in \Z_+$, we obtain
\begin{align*}
 \theta_{A+1}(x+1)
 &= \min \{n \ge 0: |\,[0,n]\setminus (A+1)\,| \ge x+2 \} \\
 &= \min \{n \ge 0: |\,[0,n+1]\setminus (A+1)\,| \ge x+2 \} + 1 \\
 &= \min \{n \ge 0: |\,[0,n]\setminus A\,| \ge x+1 \} + 1 \\
 &= \theta_A(x) + 1.
\end{align*}
\end{proof}

\begin{lemma}
\label{the:IndexDelete}
For any finite nonempty $A \subset \Z_+$ and any $x \in \Z_+$,
\[
  \theta_{A}(x) = \left\{
  \begin{aligned}
  x                , &\quad x < \min A, \\
  \theta_{A^*}(x+1), &\quad x \ge \min A.
  \end{aligned}
  \right.
\]
\end{lemma}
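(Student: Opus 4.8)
The claim splits $\theta_A(x)$ into two cases according to whether $x$ is below or at/above $\min A$. The plan is to argue directly from the definition \eqref{eq:DefShift}, namely $\theta_A(x) = \min\{n \ge 0 : |[0,n]\setminus A| \ge x+1\}$.

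First I would handle the case $x < \min A$. Here every point of $[0, x]$ lies in $A^c$, since $A$ has no elements below $\min A$, so $|[0,x] \setminus A| = x+1 \ge x+1$. For any smaller $n < x$ we have $|[0,n]\setminus A| = n+1 < x+1$. Hence the minimizing $n$ is exactly $x$, giving $\theta_A(x) = x$.

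Next, for $x \ge \min A$, I would first observe that the minimizer $n = \theta_A(x)$ satisfies $n \ge \min A$ (because for $n < \min A$ the count is only $n+1 \le \min A \le x < x+1$), so in the relevant range $n \ge \min A > \min A - 1$, which means $\min A \in [0,n]$ but $\min A \notin A^*$. Therefore $|[0,n] \setminus A^*| = |[0,n] \setminus A| + 1$ for every such $n$. Consequently $|[0,n]\setminus A| \ge x+1$ is equivalent to $|[0,n]\setminus A^*| \ge x+2$, and taking the minimum over $n$ (both conditions force $n \ge \min A$ anyway, so there is no issue with small $n$) yields $\theta_A(x) = \min\{n \ge 0 : |[0,n]\setminus A^*| \ge (x+1)+1\} = \theta_{A^*}(x+1)$.

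The only mild subtlety — and the part to state carefully rather than a real obstacle — is ensuring that restricting to $n \ge \min A$ does not change either minimum: on the left this holds because the count $|[0,n]\setminus A|$ is too small for $n < \min A$ when $x \ge \min A$, and on the right $|[0,n]\setminus A^*| = n+1$ for $n < \min A$, which is $\le \min A < x+2$, so those $n$ are excluded there too. Once that is noted, the bijection between the two threshold conditions on the common range $n \ge \min A$ finishes the proof.
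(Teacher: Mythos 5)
Your proof is correct and follows essentially the same route as the paper's: both cases are handled directly from the definition \eqref{eq:DefShift}, with the second case resting on the identity $|[0,n]\setminus A^*| = |[0,n]\setminus A| + 1$ for $n \ge \min A$ and the observation that both threshold conditions already force $n \ge \min A$, so the two sets being minimized coincide. Your explicit check that $n < \min A$ is excluded on both sides is exactly the care the paper's proof also takes.
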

\begin{proof}
The first claim is that
\[
\min\{n\ge 0: |\,[0,n]\setminus A\,| \ge x + 1\} = x
\]
whenever $[0,x]\setminus A = [0,x]$. Because
\[
|\,[0,n] \setminus A\,| \ge x + 1 = |\,[0,x]\,| = |\,[0,x]\setminus A\,|
\]
implies that $n \ge x$, the claim follows.

The second claim is that
\[
\min\{n\ge 0: |\,[0,n]\setminus A\,| \ge x + 1\} = \min\{n\ge 0: |\,[0,n]\setminus A^*\,| \ge x +
2\}
\]
whenever $x \ge \min A$. Note that $|\,[0,y] \setminus A\,| = |\,[0,y] \setminus A^*\,|-1$
whenever $y \ge \min A$. If
\[
y \in \{n\ge 0: |\,[0,n]\setminus A\,| \ge x + 1\},
\]
then $|\,[0,y] \setminus A\,| \ge x+1 = |\,[0,x]\,| \ge |\,[0,x]\setminus A\,|$, so $y \ge x \ge
\min A$. By the note above, $|\,[0,y]\setminus A\,| = |\,[0,y]\setminus A^*\,| -1$, i.e.
\[
y \in \{n\ge 0: |\,[0,n]\setminus A^*\,| \ge x + 2\}.
\]
The other direction is similar, and the sets to be minimized are the same.
\end{proof}

\begin{lemma}
\label{the:ShiftInverse}
For any finite $A \subset \Z_+$ and any $x \in A^c$,
\[
 \theta_A^{-1}(x) = h_A(x),
\]
where $h_A(x) = | \, [0,x-1] \setminus A \, |$.
\end{lemma}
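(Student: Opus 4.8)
The plan is to unwind both sides of the claimed identity $\theta_A^{-1}(x) = h_A(x)$ directly from the definitions, using the characterization of $\theta_A$ as the map sending $k \in \Z_+$ to the $(k+1)$-th smallest element of $A^c$. First I would note that since $\theta_A$ is a bijection from $\Z_+$ onto $A^c$ (as recorded after~\eqref{eq:DefShift}), the inverse $\theta_A^{-1}(x)$ is well-defined for $x \in A^c$, and it equals the unique $k \ge 0$ such that $\theta_A(k) = x$, i.e.\ the unique $k$ for which $x$ is the $(k+1)$-th smallest element of $A^c$. Equivalently, $\theta_A^{-1}(x)$ is the number of elements of $A^c$ that are strictly smaller than $x$.

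Next I would identify that count with $h_A(x) = |[0,x-1] \setminus A|$. The elements of $A^c$ strictly less than $x$ are precisely the integers in $[0,x-1]$ that do not lie in $A$, which is by definition $[0,x-1] \setminus A$; hence their number is $h_A(x)$. Combining the two observations gives $\theta_A^{-1}(x) = h_A(x)$.

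To make the middle step fully rigorous I would verify from~\eqref{eq:DefShift} that $\theta_A(k)$ really is the $(k+1)$-th smallest element of $A^c$: the defining formula $\theta_A(k) = \min\{n \ge 0 : |[0,n] \setminus A| \ge k+1\}$ picks the least $n$ for which $A^c \cap [0,n]$ has reached size $k+1$, and such an $n$ must itself lie in $A^c$ (otherwise $|[0,n]\setminus A| = |[0,n-1]\setminus A|$ and $n$ would not be minimal), so it is exactly the $(k+1)$-th element of $A^c$ in increasing order. Then $\theta_A(h_A(x)) = x$ follows because $|[0,x]\setminus A| = h_A(x)+1$ (as $x \in A^c$) while $|[0,x-1]\setminus A| = h_A(x) < h_A(x)+1$, so $x$ is the minimal $n$ with $|[0,n]\setminus A| \ge h_A(x)+1$. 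The only mild subtlety — and the closest thing to an obstacle — is the bookkeeping with the off-by-one indices ($k+1$ versus $k$, and $[0,x-1]$ versus $[0,x]$), together with handling the boundary case $x=0$ where $h_A(0) = 0$ and $[0,-1]$ is empty; everything else is immediate from the definitions.
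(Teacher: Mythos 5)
Your proposal is correct, and its rigorous core --- verifying that $x$ is the least $n$ with $|[0,n]\setminus A| \ge h_A(x)+1$, using $|[0,x]\setminus A| = |[0,x-1]\setminus A|+1$ for $x \in A^c$ --- is exactly the computation in the paper's proof. The extra framing via ``$\theta_A(k)$ is the $(k+1)$-th smallest element of $A^c$'' is a helpful gloss but does not change the argument.
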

\begin{proof}
Now the claim is that
\[
\min\{n\ge 0: |[0,n]\setminus A| \ge |[0,x-1]\setminus A| + 1\} = x
\]
whenever $x \notin A$. In this case $|\,[0,x]\setminus A\,| = |\,[0,x-1]\setminus A\,|+1$, so the
claim reduces to
\[
\min\{n\ge 0: |\,[0,n]\setminus A\,| \ge |\,[0,x]\setminus A\,|\} = x,
\]
which is trivial.
\end{proof}

\subsection{Noncolliding union}
\label{sec:UnionTechnical}

In this section we summarize basic technical properties of the noncolliding union map
$U(A,x_1,\dots,x_n)$ defined by~\eqref{eq:DefUnion} in Section~\ref{sec:Union}.

\begin{lemma}
\label{the:UnionMin}
For all finite $A \subset \Z_+$ and all $x_1,\dots,x_n \in \Z_+$,
\begin{align*}
 \min U(A,x_1,\dots,x_n)  &= \min( A \cup \{x_1,\dots,x_n\} ).
\end{align*}
\end{lemma}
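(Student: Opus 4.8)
The plan is to prove this by induction on $n$, using the recursive definition of the noncolliding union. The base case $n=0$ is the trivial identity $\min A = \min A$ (with the convention that $U(A)=A$), and the case $n=1$ amounts to showing $\min(A \cup \{\theta_A(x_1)\}) = \min(A \cup \{x_1\})$. For the $n=1$ case I would argue as follows: by the definition~\eqref{eq:DefShift} of the $A$-avoiding shift, $\theta_A(x_1) \ge x_1$ always, with equality precisely when $[0,x_1] \cap A = \emptyset$. If $x_1 < \min A$, then $[0,x_1]\cap A=\emptyset$, so $\theta_A(x_1)=x_1$ and both sides equal $x_1$. If $x_1 \ge \min A$, then $\min A \le x_1 \le \theta_A(x_1)$, so $\min A$ is the minimum on both sides. (Alternatively, Lemma~\ref{the:IndexDelete} gives exactly this dichotomy.)

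For the inductive step, assume the claim for $n$ and consider $U(A,x_1,\dots,x_{n+1}) = U(A,x_1,\dots,x_n) \cup \{\theta_{C}(x_{n+1})\}$, where $C = U(A,x_1,\dots,x_n)$. Applying the $n=1$ result with the set $C$ in place of $A$ and the single point $x_{n+1}$, we get $\min(C \cup \{\theta_C(x_{n+1})\}) = \min(C \cup \{x_{n+1}\}) = \min(\min C,\, x_{n+1})$. By the induction hypothesis, $\min C = \min(A \cup \{x_1,\dots,x_n\})$, so $\min U(A,x_1,\dots,x_{n+1}) = \min(\min(A\cup\{x_1,\dots,x_n\}),\, x_{n+1}) = \min(A \cup \{x_1,\dots,x_{n+1}\})$, which closes the induction.

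The only nonroutine ingredient is the $n=1$ fact that prepending an $A$-avoiding shift does not change the minimum of the union, and this reduces entirely to the elementary monotonicity property $\theta_A(x) \ge x$ of the shift together with the characterization of when equality holds — both of which are immediate from~\eqref{eq:DefShift} (or already packaged in Lemma~\ref{the:IndexDelete}). So I do not anticipate any real obstacle; the main point is simply to set up the induction so that each step invokes the single-point case applied to the intermediate set rather than to $A$ directly.
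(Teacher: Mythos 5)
Your proof is correct and follows essentially the same route as the paper: the same case split for $n=1$ (with $\theta_A(x_1)=x_1$ when $x_1<\min A$ and $\theta_A(x_1)\ge\min A$ otherwise) followed by induction via the recursive construction, applying the single-point case to the intermediate set. You merely spell out the inductive step that the paper leaves implicit.
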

\begin{proof}
Let us first prove the claim for $n=1$. We may assume that $A$ is nonempty, because otherwise the
claim is trivial. If $x_1 < \min(A)$, then $\theta_A(x_1) = x_1$, which implies that $\min
U(A,x_1) = x_1$. If $x_1 \ge \min(A)$, then $\theta_A(x_1) > \min(A)$, which implies that $\min
U(A,x_1) = \min(A)$. Therefore, we conclude that the claim holds for $n=1$. The general case
follows by induction, due to the recursive construction of $U(A,x_1,\dots,x_n)$.
\end{proof}

\begin{lemma}
\label{the:UnionShift}
For all finite $A \subset \Z_+$ and all $x_1,\dots,x_n \in \Z_+$,
\[
 U(A+1,x_1+1,\dots,x_n+1) = U(A,x_1,\dots,x_n) + 1.
\]
\end{lemma}
\begin{proof}
Note first that by Lemma~\ref{the:IndexShift},
\begin{align*}
 U(B+1,y+1)
 &= (B+1) \cup \{\theta_{B+1}(y+1)\} \\
 &= (B+1) \cup \{\theta_{B}(y) + 1\} \\
 &= U(B, y) + 1
\end{align*}
for all finite $B \subset \Z_+$ and all $y \in \Z_+$. Therefore, the claim is true whenever $n=1$.
If the claim holds for some $n \ge 1$, then by the induction assumption
\begin{align*}
 U(A+1,x_1+1,\dots,x_n+1)
 &= U( U(A+1,x_1+1,\dots,x_n+1), x_{n+1}+1) \\
 &= U( B + 1, x_{n+1}+1),
\end{align*}
where $B = U(A,x_1,\dots,x_n)$. The claim now follows by applying the property with $n=1$.
\end{proof}

\begin{lemma}
\label{the:UDelete}
For any finite nonempty $A \subset \Z_+$ and any $x \in \Z_+$,
\[
 U(A,x)^* = \left\{
 \begin{aligned}
   A,          & \quad x < \min A, \\
   U(A^*,x+1), & \quad x \ge \min A.
 \end{aligned}
 \right.
\]
\end{lemma}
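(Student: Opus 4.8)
The plan is to prove the identity
\[
 U(A,x)^* = \left\{
 \begin{aligned}
   A,          & \quad x < \min A, \\
   U(A^*,x+1), & \quad x \ge \min A
 \end{aligned}
 \right.
\]
by unwinding the definition $U(A,x) = A \cup \{\theta_A(x)\}$ and combining it with the two-case description of $\theta_A$ provided by Lemma~\ref{the:IndexDelete}. The only technical point is that forming the starred set $(\cdot)^*$, which deletes the \emph{smallest} element, interacts with the inserted point $\theta_A(x)$ differently in the two regimes, so I will treat them separately.

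First I would dispose of the case $x < \min A$. Here Lemma~\ref{the:IndexDelete} gives $\theta_A(x) = x$, so $U(A,x) = A \cup \{x\}$, and since $x < \min A$ the newly inserted element $x$ is strictly smaller than every element of $A$; hence $\min U(A,x) = x$ and deleting it returns exactly $A$. (If $A$ is empty the statement is vacuous since then $\min A$ is undefined; the hypothesis that $A$ is nonempty handles this.)

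Next I would treat the case $x \ge \min A$. By Lemma~\ref{the:IndexDelete}, $\theta_A(x) = \theta_{A^*}(x+1)$, and since $\theta_{A^*}$ takes values in $(A^*)^c$, which in particular excludes no element below $\min A^*$ unnecessarily, one checks that $\theta_A(x) > \min A$ (this is already noted inside the proof of Lemma~\ref{the:UnionMin}). Consequently $\min A$ is still the smallest element of $U(A,x) = A \cup \{\theta_A(x)\}$, so
\[
 U(A,x)^* = \big(A \cup \{\theta_A(x)\}\big) \setminus \{\min A\} = A^* \cup \{\theta_{A^*}(x+1)\} = U(A^*,x+1),
\]
where the last equality is just the definition of the noncolliding union. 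The one subtlety to verify here is that $\theta_{A^*}(x+1) \neq \min A$, i.e.\ that deleting $\min A$ does not accidentally remove the inserted point; but $\theta_{A^*}(x+1) \in (A^*)^c$ and $\min A \in A$ with $\min A \notin A^*$, yet we must also rule out $\theta_{A^*}(x+1) = \min A$ — this holds because $\theta_{A^*}(x+1) > \min A$ follows from $x+1 \ge \min A + 1 > \min A \ge \min A^*$ together with the fact that $\theta_{A^*}$ is strictly increasing and $\theta_{A^*}(\min A^*) \ge \min A^*$, actually more directly from $\theta_A(x) > \min A$ established above since $\theta_A(x) = \theta_{A^*}(x+1)$.

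The main obstacle, such as it is, is purely bookkeeping: making sure that in the second case the inserted element $\theta_A(x)$ genuinely exceeds $\min A$ so that $(\cdot)^*$ commutes with the insertion. This fact is not deep — it is essentially the same observation used in Lemma~\ref{the:UnionMin} — but it must be stated explicitly, because without it the identification $U(A,x)^* = A^* \cup \{\theta_{A^*}(x+1)\}$ would not follow. No induction is needed since the statement concerns only a single inserted point $x$.
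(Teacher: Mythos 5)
Your proof is correct and takes essentially the same route as the paper's: both cases reduce to Lemma~\ref{the:IndexDelete}, with the second case hinging on the observation that $\theta_A(x) > \min A$ when $x \ge \min A$, so that the starred deletion removes $\min A$ rather than the inserted point. The paper states this as $\min U(A,x) = \min A$ without elaboration; your extra verification is sound but could be trimmed.
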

\begin{proof}
First, if $x < \min A$, then $\theta_A(x) = x$ by Lemma~\ref{the:IndexDelete}. Therefore,
$U(A,x)^* = (A \cup \{x\})^* = A$. Next, if $x \ge \min A$, then $\theta_A(x) = \theta_{A^*}(x+1)$
by Lemma~\ref{the:IndexDelete}, and $\min U(A,x) = \min A$. It follows that
\[
 U(A,x)^*
 = A^* \cup \{\theta_A(x)\}
 = A^* \cup \{\theta_{A^*}(x+1)\}
 = U(A^*, x+1).
\]
\end{proof}

\newcommand{\SortNoop}[1]{}\def\cprime{$'$}

\end{document}